\documentclass[11pt]{article} 


\usepackage{amssymb,amsmath,amsthm}
\usepackage[letterpaper,hmargin=1.0in,vmargin=1.0in]{geometry}
\usepackage{graphics} 
\pagestyle{plain}

\newtheorem{theorem}{Theorem}[section]
\newtheorem{lemma}[theorem]{Lemma}
\newtheorem{proposition}[theorem]{Proposition}
\newtheorem{corollary}[theorem]{Corollary}


 %

 \newcommand{\N}{\mathcal N}

 \newcommand{\gex}{G_{ex}}

\begin{document} 
\title{$H$-free subgraphs of dense graphs maximizing the number of
cliques and their blow-ups} 
\author{
Noga Alon\thanks{Sackler School of
Mathematics and Blavatnik School of Computer
Science, Tel Aviv University, Tel Aviv 69978,
Israel.
Email: nogaa@tau.ac.il.
Research supported in part by a BSF grant,
an ISF grant and a GIF grant.
}\and
Clara Shikhelman \thanks{Sackler School of Mathematics,
Tel Aviv University,
Tel Aviv 69978, Israel.
Email: clarashk@post.tau.ac.il.
Research supported in part by an ISF grant.
}
}
\setlength{\parskip}{1ex plus 0.5ex minus 0.2ex}

\maketitle 
\begin{abstract} 
We consider the structure of $H$-free
subgraphs of graphs with high minimal degree. We prove that
for every $k>m$ there exists an $\epsilon:=\epsilon(k,m)>0$ so that the
following holds. For every graph $H$ with chromatic number $k$ from
which one can delete an edge and reduce the chromatic number, and for
every graph $G$ on $n>n_0(H)$ vertices in which all degrees are at least
$(1-\epsilon)n$, any subgraph of $G$ which is $H$-free and contains the
maximum number of copies of the complete graph $K_m$ is
$(k-1)$-colorable.

We also consider several extensions for the case of a general forbidden
graph $H$ of a given chromatic number, and for subgraphs maximizing the
number of copies of balanced blowups of complete graphs. \end{abstract}

\section{Introduction}

The well known theorem of Tur\'an (\cite{T}) 
states that a $K_k$-free subgraph of the complete graph on $n$ vertices
with the maximum possible number of edges is $k-1$-chromatic. Erd\H{o}s,
Stone and Simonovits show in \cite{ESt}, \cite{ES66}
that for
general $H$ with $\chi(H)=k$ the maximum possible number of edges in an
$H$-free graph on $n$ vertices is at most $o(n^2)$ more than the number
of edges in a $k-1$-chromatic graph on $n$ vertices. In \cite{ASh} it is
shown that the same holds for $H$-free subgraphs of the complete graph
that have the maximum possible number of copies of $K_m$ for a fixed $m$
such that $k>m\geq 2$.

Looking at subgraphs of general graphs $G$ it is clear that a $K_m$-free
subgraph of $G$ with the maximum possible number of edges has at least as
many edges as the largest $m-1$-partite subgraph. In  \cite{E83}
Erd\H{o}s asked for which graphs there is an equality between the
two. In \cite{Al} it is shown that this is the case for line graphs
of bipartite graphs. In a different direction, in
\cite{BSTT} it is proved that if a graph has a high enough minimum
degree then any subgraph of it which is $K_3$-free and has the maximum
possible number of edges is bipartite. In \cite{BKS} a stronger
bound is given on the minimum degree ensuring this. 
Before stating a generalization
of these theorems we introduce some notation.

For a graph $G$, fixed graphs $H$ and $T$ and an integer $k$ let
$G_{part(k),T}$ be a $k$-partite subgraph of $G$ with the maximum possible
number of copies of $T$ and let $\mathcal{G}_{ex}(T,H)$ be the family of
subgraphs of $G$ that are $H$-free and have the maximum possible number
of copies of $T$. Let $\N(G,T)$ denote the number of copies of $T$ in $G$.
Call a graph $H$ edge critical if there is an edge $\{u,v\}\in E(H)$
whose removal reduces the chromatic number of $H$.

In \cite{AShS} the following theorem is proved, generalizing the results
in \cite{BSTT} and \cite{BKS}. Throughout the paper we 
denote by $\delta(G)$ 
the minimum degree in the graph $G$.
\begin{theorem}[\cite{AShS}] 
Let $H$ be
a graph with $\chi(H)=k+1$. Then there are positive constants
$\gamma:=\gamma(H)$ and $\mu:=\mu(H)$	such that if $G$ is a graph on
$n>n_0(H)$ vertices with $\delta(G)>(1-\mu)n$ then for every $G_{ex}\in
\mathcal{G}_{ex}(K_2,H)$ \begin{enumerate} \item If $H$ is edge critical
then $\N(G_{part(k),K_2},K_2) = \N(G_{ex},K_2)$ \item Otherwise,
$\N(G_{part(k),K_2},K_2) \leq \N(G_{ex},K_2) \leq
\N(G_{part(k),K_2},K_2)+O(n^{2-\gamma}).$ \end{enumerate}
\end{theorem}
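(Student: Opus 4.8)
The plan is to reduce the statement to a single structural fact about $G_{ex}$: that it becomes $k$-partite after deleting $O(n^{2-\gamma})$ of its edges in general, and after deleting no edges when $H$ is edge critical. One direction of the reduction is trivial: a maximum $k$-partite subgraph of $G$ is $H$-free, since $\chi(H)=k+1$ forbids $H$ from being a subgraph of any $k$-partite graph, so $\N(G_{part(k),K_2},K_2)\le\N(G_{ex},K_2)$. For the other direction, fix any partition $V(G)=V_1\cup\cdots\cup V_k$ and delete from $G_{ex}$ every edge lying inside a part; what remains is a $k$-partite subgraph of $G$, hence has at most $\N(G_{part(k),K_2},K_2)$ edges, giving
\[
\N(G_{ex},K_2)-\N(G_{part(k),K_2},K_2)\ \le\ \#\{\,\text{edges of }G_{ex}\text{ inside the parts of this partition}\,\}.
\]
So it suffices to exhibit one partition under which $G_{ex}$ spans $O(n^{2-\gamma})$ internal edges (resp.\ none).

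To find such a partition I would use stability. A maximum $k$-partite subgraph of $G$ keeps at least a $(1-1/k)$-fraction of the $e(G)\ge(1-\mu)n^2/2$ edges of $G$, so $G_{ex}$ has at least $(1-1/k-o(1))\binom n2$ edges; combined with the Erd\H{o}s--Stone--Simonovits bound, $G_{ex}$ is within $o(n^2)$ edges of the Tur\'an number for $H$, and the stability theorem of Erd\H{o}s and Simonovits supplies a partition $P=(V_1,\dots,V_k)$ under which $G_{ex}$ has only $o(n^2)$ internal edges. I would then pass to a partition minimising the number of internal edges of $G_{ex}$; using $\delta(G)\ge(1-\mu)n$ one gets that the parts are nearly balanced, $\lvert\,|V_i|-n/k\,\rvert=o(n)$, and that all but $o(n)$ vertices are \emph{typical}, meaning a typical $v\in V_i$ is $G_{ex}$-nonadjacent to at most $\lambda n$ vertices outside $V_i$ and $G_{ex}$-adjacent to at most $\lambda n$ vertices of $V_i$ (here $\lambda$ is a small constant chosen in terms of $H$ before $\mu$).

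The core is an embedding step, where the hypothesis on $H$ enters. The typical vertices carry a ``near-complete $k$-partite'' structure in $G_{ex}$ --- each typical vertex misses only $o(n)$ vertices in each other part, and across parts the exceptional non-edges between typical vertices form a graph of $o(n)$-bounded degree --- so inside the typical vertices one can greedily embed any fixed complete $k$-partite graph with parts of bounded size. When $H$ is edge critical, deleting an edge $ab$ and properly $k$-colouring places $a$ and $b$ in one colour class, so $H$ embeds into a complete $k$-partite graph plus one extra edge inside one part; therefore an edge of $G_{ex}$ with both endpoints typical and in the same $V_i$ could serve as $ab$, and embedding the remaining colour classes in the other parts would produce a copy of $H$ in $G_{ex}$ --- impossible. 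For general $H$ the same embedding shows that among the typical vertices of each $V_i$ the graph $G_{ex}$ contains no member of the decomposition family of $H$ (the finite family of graphs $D$ for which a sufficiently large complete $(k-1)$-partite graph, joined completely to a copy of $D$, already contains $H$); since this family always contains a bipartite graph --- take $H[A_k\cup A_{k+1}]$ for a proper $(k+1)$-colouring $A_1,\dots,A_{k+1}$ of $H$ --- standard upper bounds on Tur\'an numbers of bipartite graphs bound the number of internal edges of $G_{ex}$ among the typical vertices of each part by $O(n^{2-\gamma})$ for a suitable $\gamma=\gamma(H)>0$.

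What remains --- and what I expect to be the main obstacle --- is to control the internal edges incident to atypical vertices, combining the minimum-degree hypothesis with the maximality of $G_{ex}$ by a local switching argument. If an atypical vertex $v$ is incident to many internal edges of $G_{ex}$, then since $v$ has at least $(1-\mu)n$ neighbours in $G$ and the parts are nearly balanced, either re-assigning $v$ to the part in which it has fewest $G_{ex}$-neighbours strictly lowers the number of internal edges, contradicting the choice of $P$; or, when that is blocked, deleting all internal edges at $v$ and re-adding enough cross-edges at $v$ (edges of $G$) keeps $G_{ex}$ $H$-free while increasing the number of edges, contradicting its maximality --- the absence of a new copy of $H$ here being checked again via the near-complete structure on the typical vertices. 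Iterating this cleaning drives the number of internal edges of $G_{ex}$ down to zero when $H$ is edge critical, and to $O(n^{2-\gamma})$ in general; substituting into the displayed inequality yields $\N(G_{ex},K_2)=\N(G_{part(k),K_2},K_2)$ in the first case and the stated $O(n^{2-\gamma})$ gap in the second. The delicate points are reconciling the two kinds of switching and, for edge critical $H$, pushing the count exactly to $0$ rather than merely to $o(n^2)$.
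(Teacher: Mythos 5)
First, a note on scope: the paper does not prove this theorem --- it is imported from \cite{AShS} --- so there is no internal proof to compare against. The closest in-house relatives are Theorem \ref{thm:mainComplete} (whose case $m=2$ recovers part 1) and Proposition \ref{prop:otherH}, and those are proved by a quite different route: iteratively deleting low-degree vertices until the Andr\'asfai--Erd\H{o}s--S\'os and Erd\H{o}s--Simonovits minimum-degree theorems force $(k-1)$-colorability, then re-inserting the deleted vertices into a $k$-partite graph and comparing counts. Your route is the stability route. Its skeleton is sound: the reduction to counting internal edges of $G_{ex}$ with respect to a well-chosen $k$-partition is correct, the stability step gives an initial partition with at most $\varepsilon(\mu)n^2$ internal edges (not $o(n^2)$, and likewise the atypical set has size $O(\mu n/\lambda)$ rather than $o(n)$, but these are harmless), and the embedding step correctly shows that no internal edge joins two typical vertices when $H$ is edge critical, and that in general the typical part of each $V_i$ excludes a bipartite member of the decomposition family and hence spans $O(n^{2-\gamma})$ edges.

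The genuine gap is exactly where you flag it, and it is not a technicality: the internal edges at atypical vertices, and in the edge-critical case the passage from ``few'' to ``zero.'' Your dichotomy does not close either case. (i) Minimality of the partition only yields $d_{G_{ex}}(v,V_i)\le d_{G_{ex}}(v,V_j)$ for all $j$, which is perfectly consistent with $v$ having one (or $\Theta(\mu n)$) internal neighbours, so ``re-assigning $v$ lowers the count'' need not apply. (ii) The alternative switch --- delete $v$'s internal edges and add cross-edges of $G$ at $v$ --- need not increase the edge count at all: $v$ may already be $G_{ex}$-adjacent to essentially every cross $G$-neighbour it has, leaving nothing to add. (iii) Even when there is something to add, verifying $H$-freeness of the modified graph is not ``checked via the typical vertices'': a new copy of $H$ through $v$ may route its remaining vertices through \emph{other} atypical vertices and their surviving internal edges, so the switches at different atypical vertices interfere and cannot be performed independently. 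Since an atypical set of size $\Theta(\mu n)$ could a priori carry $\Theta(\mu n^2)$ internal edges, this step is needed even for the $O(n^{2-\gamma})$ bound of part 2, not only for the exact equality of part 1. This is precisely the content that makes \cite{BSTT} and \cite{BKS} nontrivial already for $H=K_3$; the vertex-removal argument of Section 3 (via Corollary \ref{cor:ifFreeThenChrom}) is one clean way to sidestep it, and as written your proposal does not supply a substitute.
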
 

In the present
short paper we prove two theorems for $H$-free subgraphs assuming $H$
is edge critical. The first is for subgraphs maximizing the number of
copies of $K_m$ and the second for subgraphs maximizing the number of
blow-ups of $K_m$. We also establish a proposition concerning graphs $H$
that are not edge critical. 

\begin{theorem} 
\label{thm:mainComplete} 
For every two integers $k>m$
and every edge critical graph $H$ such that $\chi(H)=k$ there exist
constants $\epsilon:=\epsilon(k,m)>0$ and $n_0=n_0(H)$ such that the
following holds. Let $G$ be a graph on $n>n_0$ vertices with
$\delta(G)\geq (1-\epsilon)n$, then for every $G_{ex}\in
\mathcal{G}_{ex}(K_m,H)$ the graph $G_{ex}$ is $(k-1)$-colorable.
\end{theorem}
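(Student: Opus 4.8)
Here is the plan.

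\medskip

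\noindent\emph{Step 1: pin down $\N(\gex,K_m)$.} First I would get matching bounds on the number of copies of $K_m$ in $\gex$. For the lower bound, note that $\delta(G)\ge(1-\epsilon)n$ means the complement of $G$ has maximum degree at most $\epsilon n$, so a balanced random partition of $V(G)$ into $k-1$ classes gives a $(k-1)$-partite (hence $H$-free, since $\chi(H)=k$) subgraph of $G$ in which every pair of classes spans all but an $O_k(\epsilon)$-fraction of the possible edges; counting copies of $K_m$, and using $k-1\ge m$, yields $\N(G_{part(k-1),K_m},K_m)\ge(1-O_k(\epsilon))\N(T_{k-1}(n),K_m)$, where $T_{k-1}(n)$ is the Tur\'an graph. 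Since $\gex$ has at least as many copies of $K_m$ as this competitor, $\N(\gex,K_m)\ge(1-O_k(\epsilon))\N(T_{k-1}(n),K_m)$. For the upper bound I would invoke the Erd\H{o}s--Stone--Simonovits--type result of \cite{ASh}: as $\gex$ is $H$-free, $\N(\gex,K_m)\le\N(T_{k-1}(n),K_m)+o(n^m)$.

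\medskip

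\noindent\emph{Step 2: approximate and then robust structure.} Combining the two bounds, $\gex$ is an $H$-free graph whose number of copies of $K_m$ is within $O_k(\epsilon)+o(1)$ of the maximum, so the stability version of \cite{ASh} (obtained in the usual way via the removal/regularity method) produces a partition $V(\gex)=V_1\cup\cdots\cup V_{k-1}$ with $\bigl||V_i|-n/(k-1)\bigr|=o(n)$ and few internal edges; I would pass to the partition minimizing the number $I$ of internal edges. Comparing $\gex$ with the $(k-1)$-partite subgraph of $G$ carried by the \emph{same} classes --- which is $H$-free and hence a legitimate competitor --- shows that the number of cross pairs of $G$ missing from $\gex$ is $O_{k,m}(I)$, since each such missing pair destroys $\Omega_{k,m}(n^{m-2})$ copies of $K_m$ that only the internal edges could compensate for. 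Consequently every pair of classes has density at least $1-\eta$ and all but an $O_{k,m}(\sqrt{\eta})$-fraction of the vertices are \emph{good}, i.e.\ adjacent in $\gex$ to at least a $(1-\sqrt{\eta})$-fraction of every other class and to at most a $\sqrt{\eta}$-fraction of their own; here $\eta=\eta(\epsilon,k,m)$ can be forced below any prescribed $\eta_0(k,m)$ by choosing $\epsilon$ small. The crucial point --- and the reason $\epsilon$ need not depend on $H$ --- is that a density bounded away from $1$ by a constant depending only on $k$ already guarantees, by the Erd\H{o}s--Stone / K\H{o}v\'ari--S\'os--Tur\'an theorem, that the subgraph induced by the good vertices contains complete $(k-1)$-partite subgraphs with parts of size $s=s(n)\to\infty$; the graph $H$ enters only through the threshold on $n$.

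\medskip

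\noindent\emph{Step 3: remove all internal edges.} Suppose $\gex$ is not $(k-1)$-colourable; I would first rule out an internal edge $vw\in E(\gex)$ with $v,w$ good and in the same class, say $V_1$. Fix a proper $(k-1)$-colouring of $H$ minus its critical edge $xy$ in which $x,y$ receive the same colour; embed that colour class into $V_1$ with $x\mapsto v,\ y\mapsto w$, and embed the remaining colour classes into a large complete $(k-2)$-partite subgraph of $\gex$ lying inside $N_{\gex}(v)\cap N_{\gex}(w)$ across $V_2,\dots,V_{k-1}$ (available because $v,w$ are good, the pairwise densities there are still bounded away from $1$ by a $k$-constant, and $n>n_0(H)$), placing the rest of the first colour class on good vertices of $V_1$. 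Since $xy$ is realised by $vw\in E(\gex)$, this is a copy of $H$ in $\gex$, a contradiction; hence the partition properly colours $\gex$ restricted to its good vertices. It then remains to absorb the (constant fraction of) non-good vertices: I would re-home each such vertex into the class where it has fewest $\gex$-neighbours and iterate, using the $K_m$-maximality of $\gex$ together with the edge-criticality of $H$ --- which forces any copy of $H$ that such a local modification could create to use a very restricted configuration that one can separately forbid --- to conclude that after finitely many moves no internal edge survives, so $\gex$ is $(k-1)$-colourable.

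\medskip

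\noindent\emph{The main obstacle.} The delicate part is the last clean-up of the non-good vertices: one must argue that a $K_m$-optimal $H$-free subgraph cannot afford to retain a vertex whose neighbourhood deviates from the complete multipartite pattern, and, in contrast with the edge-maximization case treated in \cite{AShS}, the bookkeeping must track how such deviations and the local modifications change the number of copies of $K_m$ (careful accounting for copies through internal edges, through low-degree vertices, etc.) rather than the number of edges. The two bounds of Step~1 and the approximate structure of Step~2 I expect to be routine given \cite{ASh}.
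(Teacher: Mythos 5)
Your plan is a stability argument, which is genuinely different from the paper's proof, but it has a real gap at exactly the point you flag as ``the main obstacle,'' and that obstacle is the whole theorem. Two concrete problems. First, Step 2 invokes a stability version of the generalized Tur\'an result of \cite{ASh} that is neither stated nor proved anywhere; moreover, the slack you have from Step 1 is only $O_k(\epsilon)n^m$, not $o(n^m)$, so even granting such a stability statement the best you can extract is a partition with $\Theta_\epsilon(n^2)$ internal edges and hence a \emph{constant fraction} of non-good vertices. Second, and decisively, the absorption of that constant fraction of non-good vertices in Step 3 (``re-home each such vertex \dots and iterate \dots to conclude that after finitely many moves no internal edge survives'') is not an argument: you never quantify how many copies of $K_m$ a relocated vertex gains versus loses, nor why the iteration terminates, nor why a vertex with genuinely mixed neighbourhoods cannot persist in a $K_m$-optimal $H$-free subgraph. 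Since the good-vertex embedding in Step 3 is routine, all of the content of the theorem lives in precisely the part you defer.

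For comparison, the paper avoids stability entirely. The key structural input is the Andr\'asfai--Erd\H{o}s--S\'os theorem combined with the Erd\H{o}s--Simonovits bound for edge-critical $H$ (Corollary \ref{cor:ifFreeThenChrom}): any $H$-free graph with minimum degree at least $(1-\tfrac{3}{3k-4})$ times its order is already $(k-1)$-colorable, with no cleanup needed. One then assumes $\delta(\gex)<(1-\tfrac{3}{3k-4})n$ and iteratively deletes low-degree vertices; the loss of $K_m$'s at each deletion is bounded via $ex(\cdot,K_{m-1},H-v)$ applied to the deleted vertex's neighbourhood (which must be $(H-v)$-free), the process is shown to stop within $n/2$ steps, the surviving core is $(k-1)$-colorable by the corollary, and Lemma \ref{lem:returningVertices} re-inserts the deleted vertices into a $(k-1)$-partite competitor gaining strictly more copies of $K_m$ per vertex than were lost --- contradicting maximality. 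If you want to salvage your route, you would need to replace the hand-waved absorption step with a quantitative exchange argument of exactly this re-insertion type; at that point you are essentially reconstructing the paper's proof, and the stability machinery of Steps 1--2 becomes unnecessary.
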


For integers $m$ and $t$ let $K_m(t)$ denote the $t$-blow-up of $K_m$,
that is, the graph obtained by replacing each vertex of $K_m$ by an
independent set of size $t$ and each edge by a complete bipartite
graph between the corresponding independent sets.

\begin{theorem} 
\label{thm:mainBlowUp} 
For integers $m$ and $t$ and
every edge critical $H$ such that $\chi(H)=m+1$ there exist constants
$\epsilon:=\epsilon(m,t)$ and $n_0:=n_0(H)$ such that the following
holds. Let $G$ be a graph on $n>n_0$ vertices with
$\delta(G)>(1-\epsilon)n$, then every $G_{ex}\in
\mathcal{G}_{ex}(K_m(t),H)$ is $m$-colorable.  
\end{theorem}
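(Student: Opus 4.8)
The plan is to follow the strategy used for Theorem~\ref{thm:mainComplete}, with the clique $K_m$ replaced throughout by $K_m(t)$: first a supersaturation estimate reducing matters to the near-extremal case, then a stability statement for edge-critical $H$, and finally an exchange argument in which edge-criticality enters essentially. Write $k=\chi(H)=m+1$, let $T$ be the balanced complete $m$-partite graph on $n$ vertices, and fix $\gex\in\mathcal{G}_{ex}(K_m(t),H)$; the goal is to show $\gex$ is $m$-colorable. First I would check that $\gex$ is almost extremal. Since $\delta(G)\ge(1-\epsilon)n$ the complement of $G$ has fewer than $\epsilon n^2$ edges, so deleting the within-part edges of a balanced $m$-partition of $V(G)$ yields an $m$-partite --- hence, as $\chi(H)=m+1$, $H$-free --- subgraph $G_0\subseteq G$; since each of its at most $\epsilon n^2$ missing cross-pairs destroys only $O(n^{mt-2})$ copies of $K_m(t)$ we get $\N(\gex,K_m(t))\ge\N(G_0,K_m(t))\ge\N(T,K_m(t))-O(\epsilon)\,n^{mt}$. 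Conversely $\gex$ is $H$-free and $\chi(K_m(t))=m<\chi(H)$, so the Erd\H{o}s--Stone--Simonovits bound for the number of copies of a fixed subgraph (the $K_m(t)$-analogue of the estimates in \cite{ESt,ES66,ASh}) gives $\N(\gex,K_m(t))\le\N(T,K_m(t))+o(n^{mt})$. Hence, once $\epsilon$ is small and $n$ large, $\gex$ is within $o(n^{mt})$ of the maximum number of copies of $K_m(t)$ in any $H$-free graph on $n$ vertices.

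Next I would invoke stability. Being an almost-extremal $H$-free graph, with $H$ edge-critical of chromatic number $m+1$, $\gex$ admits a partition $V(\gex)=V_1\cup\cdots\cup V_m$ with $\big||V_i|-n/m\big|=o(n)$ that becomes a proper $m$-colouring after deleting $o(n^2)$ edges; this is the $K_m(t)$-analogue of the Erd\H{o}s--Simonovits stability theorem for edge-critical graphs, provable as in \cite{ASh}. Choosing such a partition so as to minimise the number of within-part edges and then performing a cleaning step, I would upgrade this to the statement that \emph{every} vertex $v\in V_i$ is adjacent in $\gex$ to all but at most $\eta n$ vertices of each $V_j$ with $j\ne i$, where $\eta=\eta(m,t)$ is a small constant. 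The point of the cleaning step is that if some vertex $v\in V_i$ fails this, then --- and this is where $\delta(G)\ge(1-\epsilon)n$ is used decisively --- one may delete all edges at $v$ and re-join $v$ inside $G$ to all but $O(\epsilon n)$ of the vertices outside $V_i$; a direct count shows this strictly increases $\N(\cdot,K_m(t))$, while it creates no copy of $H$ (any such copy would use $v$, whose at most $|V(H)|$ neighbours in it all lie outside $V_i$, so replacing $v$ by a vertex of $V_i$ adjacent to all of them --- which exists since all but $o(n)$ vertices of $\gex$ are already well-behaved --- would exhibit a copy of $H$ inside $\gex$), contradicting the extremality of $\gex$.

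Finally I would rule out within-part edges. Suppose $\gex$ is not $m$-colorable; then the above partition has a within-part edge, say $xy$ with $x,y\in V_1$. Since $H$ is edge-critical, fix an edge $e'=x'y'\in E(H)$ with $\chi(H-e')=m$ and a proper $m$-colouring $c$ of $H-e'$; since $\chi(H)=m+1$, the colouring $c$ is not proper on $H$, forcing $c(x')=c(y')$, say both equal $1$. Now embed $H$ into $\gex$ greedily: set $x'\mapsto x$, $y'\mapsto y$, and send each remaining vertex $z$ of $H$ to a hitherto unused vertex of $V_{c(z)}$. When $z$ is placed it has fewer than $|V(H)|$ already-placed neighbours in $H-e'$, all lying in parts other than $V_{c(z)}$; since every vertex of $\gex$ misses at most $\eta n$ vertices of each other part and $|V_{c(z)}|\gg|V(H)|$, a valid image always exists. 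Thus every edge of $H-e'$ is realised in $\gex$, and together with $xy\in E(\gex)$ playing the role of $e'$ this exhibits a copy of $H$ in $\gex$, contradicting $H$-freeness; hence $\gex$ is $m$-colorable.

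I expect the main obstacle to be the stability-and-cleaning step of the second paragraph: in particular, the precise count showing that re-joining a misbehaving vertex gains $\Omega(n^{mt-1})$ copies of $K_m(t)$, and the bookkeeping needed to handle the $o(n)$ exceptional vertices, both of which follow the pattern of \cite{ASh} and \cite{AShS} but require care. The use of edge-criticality is, exactly as in Theorem~\ref{thm:mainComplete}, confined to the third paragraph, and it is genuinely needed: for a non-edge-critical $H$, such as $H=2K_3$ with $m=2$, a balanced complete bipartite graph together with a single extra edge is $H$-free and not bipartite, so the conclusion of the theorem fails.
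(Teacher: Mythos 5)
Your outline is a genuinely different route from the paper's. The paper never proves (or needs) a stability theorem: it observes via Corollary~\ref{cor:ifFreeThenChrom} (Andr\'asfai--Erd\H{o}s--S\'os combined with Erd\H{o}s--Simonovits, which is where edge-criticality enters) that an $H$-free graph with minimum degree at least $(1-\frac{3}{3m-1})n$ is already $m$-colorable, and then handles the case $\delta(\gex)<(1-\frac{3}{3m-1})n$ by an iterative deletion of low-degree vertices followed by re-insertion into an $m$-partite subgraph (Lemma~\ref{lem:returningVertices}), showing the re-inserted vertices gain more copies of $K_m(t)$ than the deleted ones lost. Your plan instead runs supersaturation $\to$ stability $\to$ cleaning $\to$ critical-edge embedding, with edge-criticality used only in the final embedding. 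That architecture is in principle viable, but as written it has two genuine gaps.

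First, the stability input of your second paragraph is asserted, not available. You need: every $H$-free graph on $n$ vertices with $(1-o(1))\,ex(n,K_m(t),H)$ copies of $K_m(t)$ admits a near-balanced $m$-partition with $o(n^2)$ non-crossing edges. Neither \cite{ASh} nor \cite{ESt,ES66} contains this for the counting function $\N(\cdot,K_m(t))$; the paper itself only proves the much weaker Proposition~\ref{prop:ex(n,K_m(t),H)} (the asymptotic value of $ex(n,K_m(t),H)$) via the removal lemma. A stability version would have to be proved (reduce to $K_{m+1}$-free via the removal lemma, then prove stability for $K_{m+1}$-free graphs maximizing $K_m(t)$), and this is a substantive piece of work, not a citation. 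Second, and more seriously, the cleaning step's central claim --- that re-wiring a vertex $v\in V_i$ which misses at least $\eta n$ vertices of some $V_j$ strictly increases $\N(\cdot,K_m(t))$ --- is not justified and is not obviously true. The copies of $K_m(t)$ gained by re-wiring and the copies lost are both of order $n^{mt-1}$, and a ``bad'' vertex in your sense can still lie in $\Theta(n^{mt-1})$ copies: it may have up to linearly many neighbours inside $V_i$, and it may participate in copies whose colour classes are not aligned with the partition (note that in a copy of $K_m(t)$ the vertices sharing $v$'s class need not be adjacent to $v$ at all). Bounding these contributions requires exactly the kind of careful per-vertex count that the paper carries out in Lemma~\ref{lem:returningVertices} and in the dense/sparse case analysis of its proof of this theorem; your proposal flags this as ``the main obstacle'' but does not supply the comparison, so the argument as it stands does not close. (The final embedding paragraph is fine once the cleaning conclusion is granted, with the minor caveat that $\eta$ must be taken smaller than roughly $1/(m\,v(H))$, hence dependent on $H$.)
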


Finally, for graphs $H$ which are not edge critical we prove the
following. 
\begin{proposition}
\label{prop:otherH} 
For every integers
$m<k$ and $t$ and graph $H$ such that $\chi(H)=k$ there exists
$\epsilon:=\epsilon(m,t,k)$ and $n_0:=n_0(H)$ such that the following
holds. Let $G$ be a graph on $n>n_0$ vertices with
$\delta(G)>(1-\epsilon)n$ and assume that $t=1$ or $k=m+1$, then every
$G_{ex}\in \mathcal{G}_{ex}(K_m(t),H)$ can be made $k-1$
colorable
by deleting $o(n^2)$ edges. 
\end{proposition}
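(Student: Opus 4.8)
The plan is to show that $G_{ex}$ is asymptotically extremal for the generalized Tur\'an problem $ex(n,K_m(t),H)$, to identify that extremal count with the number of copies of $K_m(t)$ in the balanced complete $(k-1)$-partite graph $T_{k-1}(n)$ on $n$ vertices, and then to deduce the approximate structure of $G_{ex}$ from a stability argument. The hypothesis ``$t=1$ or $k=m+1$'' is exactly what makes $T_{k-1}(n)$ the extremal configuration: this can fail otherwise --- already for $K_2(2)=C_4$ and $\chi(H)=4$, the balanced complete \emph{bipartite} graph has more copies of $C_4$ than $T_3(n)$ does --- so that $(k-1)$-colorability is the natural target only under this assumption.

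First I would prove a lower bound on $\N(G_{ex},K_m(t))$. Partition $V(G)$ into $k-1$ balanced classes and let $G_0$ be the spanning subgraph of $G$ consisting of all edges of $G$ joining distinct classes; then $G_0$ is $(k-1)$-colorable, hence (as $\chi(H)=k$) $H$-free, so $\N(G_{ex},K_m(t))\ge\N(G_0,K_m(t))$ by the choice of $G_{ex}$. Since $\delta(G)\ge(1-\epsilon)n$, the graph $G$ has at most $\epsilon n^2/2$ non-edges, so $G_0$ is obtained from $T_{k-1}(n)$ by deleting at most $\epsilon n^2/2$ edges; as $K_m(t)$ embeds in $T_{k-1}(n)$ for $n$ large (using $m<k$) and each deleted edge lies in $O(n^{mt-2})$ of its copies of $K_m(t)$, this gives
\[
\N(G_{ex},K_m(t))\ \ge\ \N(T_{k-1}(n),K_m(t)) - C(m,t,k)\,\epsilon\,n^{mt}.
\]

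Next I would bound $\N(G_{ex},K_m(t))$ from above. In both admissible cases $\chi(K_m(t))=m<k=\chi(H)$, so by the Erd\H{o}s--Stone--Simonovits theorem together with its extension to counting copies of a fixed graph (\cite{ASh}), any $H$-free graph on $n>n_0(H)$ vertices contains at most $\N(T_{k-1}(n),K_m(t))+o(n^{mt})$ copies of $K_m(t)$; here one uses that $T_{k-1}(n)$ is asymptotically extremal for $ex(n,K_m(t),K_k)$, which is Zykov's theorem when $t=1$ and the analogous statement for blow-ups of $K_m$ in $K_{m+1}$-free graphs when $k=m+1$. Combining the two estimates and choosing $\epsilon=\epsilon(m,t,k)$ small, $\N(G_{ex},K_m(t))\ge(1-\rho)\,\N(T_{k-1}(n),K_m(t))$ with $\rho=\rho(\epsilon)$ as small as we please; that is, $G_{ex}$ is essentially extremal for $ex(n,K_m(t),H)$.

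Finally I would convert this near-optimality into approximate $(k-1)$-colorability. The stability version of the Erd\H{o}s--Stone--Simonovits theorem, in its form for counting copies of $K_m(t)$ (which can be derived via the graph removal lemma), yields a $(k-1)$-partition $V_1\cup\cdots\cup V_{k-1}$ of $V(G_{ex})$ with almost balanced parts and few edges inside the parts; choosing this partition to minimize the number of internal edges, a cleaning argument --- using that $\delta(G)\ge(1-\epsilon)n$, so that every vertex sees almost all of every other part --- shows that a vertex with many neighbours inside its own part could be moved, together with a bounded local repair, to produce an $H$-free subgraph of $G$ with strictly more copies of $K_m(t)$, contradicting extremality; summing over vertices bounds the number of internal edges by $o(n^2)$, and deleting them makes $G_{ex}$ $(k-1)$-colorable. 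This is the same scheme used in the proofs of Theorems \ref{thm:mainComplete} and \ref{thm:mainBlowUp}, the sole difference being that there one invokes edge criticality --- embedding $H$ minus its critical edge across the $k-1$ parts and completing it through any prospective internal edge --- to push the count of internal edges all the way to $0$; without edge criticality one stops at $o(n^2)$, which is what the Proposition claims. I expect this last cleaning step, bounding the internal edges by $o(n^2)$, to be the main obstacle.
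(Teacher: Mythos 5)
Your first half matches the paper's: you establish that $\N(G_{ex},K_m(t))$ is $(1+o(1))$ times the maximum over $(k-1)$-partite (equivalently, $K_k$-free) subgraphs, using the minimum-degree lower bound and the $ex(n,K_m(t),H)$ upper bounds from \cite{ASh}; your observation about why ``$t=1$ or $k=m+1$'' is needed is also the right one. The problem is the second half. You convert near-extremality into $o(n^2)$-near-$(k-1)$-colorability by invoking ``the stability version of the Erd\H{o}s--Stone--Simonovits theorem, in its form for counting copies of $K_m(t)$,'' and you assert parenthetically that this can be derived from the graph removal lemma. It cannot, at least not directly: the removal lemma only reduces the $H$-free problem to the $K_k$-free one at a cost of $o(n^2)$ edges (that is Lemma \ref{lem:HfreeIsLikeK_k}); after that reduction you still need a stability theorem for the \emph{generalized} Tur\'an problem --- a $K_k$-free graph with $(1-o(1))$ times the maximum number of copies of $K_m(t)$ is $o(n^2)$-close to $(k-1)$-partite --- and no such statement is available in the paper or cited in a usable form. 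You yourself flag this cleaning step as ``the main obstacle,'' and it is: it is essentially the entire content of Section 5.

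The paper does not use stability at all. It proves a self-contained dichotomy (Lemma \ref{cor:BigDiff}): for any $K_k$-free subgraph $G^1$ of $G$, either $\N(G^1,K_m(t))\le\gamma\N(G_{ex},K_m(t))$ for some fixed $\gamma<1$, or $G^1$ is $o(n^2)$-close to $(k-1)$-chromatic. The proof runs the same minimum-degree vertex-deletion process as in Theorems \ref{thm:mainComplete} and \ref{thm:mainBlowUp}: if the process stops after $o(n)$ steps, Theorem \ref{thm:highDegforKk} makes the remainder $(k-1)$-chromatic and only $o(n^2)$ edges were lost (case 2); if it runs for $cn$ steps, the re-insertion count of Lemma \ref{lem:returningVertices} (which crucially exploits $\delta(G)\ge(1-\epsilon)n$ of the \emph{ambient} graph, not just of $G^1$) produces a $K_k$-free subgraph beating $G^1$ by a constant factor (case 1). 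Your near-extremality computation then rules out case 1. So your plan is salvageable, but only by proving something equivalent to Lemma \ref{cor:BigDiff}; as written, the decisive step is a citation to a theorem that does not exist in the required form, and your sketched vertex-moving ``repair'' argument is too vague to substitute for it. This is a genuine gap.
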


Theorems \ref{thm:mainComplete} and \ref{thm:mainBlowUp} cannot be
directly generalized to graphs $H$ that are not edge critical as we can
add to any $k-1$-partite graph an edge without creating a copy of such
$H$. On the other hand, we believe that the error term $o(n^2)$ in
Proposition \ref{prop:otherH} can be improved to
$O(n^{2-\delta})$ for some $\delta:=\delta(H)$. 

The rest of this short paper is organized  as follows. In Section 2
we state several known results and prove some helpful lemmas. Section
3 contains the proof of of Theorem  \ref{thm:mainComplete}. Theorem
\ref{thm:mainBlowUp} is proved in Section 4 and the proof of
Proposition \ref{prop:otherH} appears in Section 5. The final Section
6 contains some concluding remarks and open problems.

\section{Preliminary results}

We start by stating several results about $H$-free graphs with high
degrees
and by deducing a corollary. Some of the theorems stated 
are simplified versions of the original results.

The first result about $K_k$-free graphs is by  Andr\'asfai, Erd\H{o}s
and S\'os.

\begin{theorem}[\cite{AES}] 
\label{thm:highDegforKk} 
Let $G$ be a graph
on $n$ vertice. If $G$ is $K_k$-free and $\delta(G)\geq
(1-\frac{3}{3k-4})n$ then $\chi(G)\leq k-1$. 
\end{theorem}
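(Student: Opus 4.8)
The plan is to argue by contradiction: assume $G$ is $K_k$-free with $\delta(G)>(1-\tfrac{3}{3k-4})n$ but $\chi(G)\ge k$, and extract from this a rigid blow-up structure that is incompatible with the degree hypothesis. (The inequality in the statement should really be strict: a balanced blow-up of $C_5$ already shows that $\delta(G)\ge\tfrac25n$ does not force a triangle-free graph to be bipartite; this matches the way the result is used later in the paper. More generally the tight example is a balanced blow-up of $C_5$ with classes of size $a$, completely joined to a complete $(k-3)$-partite graph with classes of size $3a$, which is $K_k$-free, $k$-chromatic, and of minimum degree exactly $(1-\tfrac3{3k-4})$ times its order.) It is cleanest to handle the triangle-free case $k=3$ first and then bootstrap.

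For $k=3$ the claim is that a triangle-free graph $G$ with $\delta(G)>2n/5$ is bipartite. Assume it is not; we may assume $G$ is connected, since each component of order $n_i\le n$ inherits $\delta>2n/5\ge2n_i/5$. Let $C=v_1\cdots v_\ell$ be a shortest odd cycle, so $\ell\ge5$ and $C$ is induced; moreover $C$ is isometric in $G$, since a shorter $G$-path between two of its vertices would close up with one of the two arcs of $C$ into an odd closed walk, hence an odd cycle, of length less than $\ell$. Using isometry one first shows $\ell=5$: if $\ell\ge7$ then the indices $1,4,7$ are pairwise at cycle-distance $\ge3$ or $1$, so $N(v_1),N(v_4),N(v_7)$ are pairwise disjoint, each of size $>2n/5$, forcing $n>6n/5$. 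With $C=v_1\cdots v_5$ an induced $5$-cycle, triangle-freeness forces the $C$-neighbourhood of any vertex off $C$ to be one of the independent sets of $C_5$ — empty, a single $v_i$, or a pair $\{v_i,v_{i+2}\}$ — and the degree bound is used to show that every vertex is within distance one of $C$; a propagation argument along edges, invoking triangle-freeness to forbid conflicts, then assigns each vertex a class $V_1,\dots,V_5$ with edges running only between consecutive classes, i.e. it exhibits a homomorphism from $G$ onto $C_5$. As $G$ is non-bipartite all five classes are nonempty, so choosing $u_i\in V_i$ gives $\delta(G)\le\deg(u_i)\le|V_{i-1}|+|V_{i+1}|$, and summing over $i$ yields $5\,\delta(G)\le2n$, the desired contradiction.

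For general $k$ I would first record a reduction: for any vertex $v$ the graph $G[N(v)]$ is $K_{k-1}$-free, and since $\deg_{G[N(v)]}(u)\ge\deg_G(u)-(n-\deg_G(v))$ with both terms at least $(1-\tfrac3{3k-4})n$, a one-line calculation gives $\delta(G[N(v)])>(1-\tfrac3{3(k-1)-4})\,|N(v)|$, so by induction $\chi(G[N(v)])\le k-2$ for every $v$. This local fact alone does not yield $\chi(G)\le k-1$, so I would re-run the analysis of the $k=3$ case one level up. The degree bound forces a copy of $K_{k-1}$ in $G$ (build a clique greedily: the common neighbourhood of any clique of size at most $k-2$ is nonempty by the degree bound, so one reaches size $k-1$); analysing the neighbourhoods of its vertices — using $K_k$-freeness to forbid the relevant cliques and the degree bound to reach every vertex — one shows that $G$ admits a homomorphism onto $C_5\vee K_{k-3}$, the join of a $C_5$ with a clique on $k-3$ vertices: $V(G)$ splits into classes $V_1,\dots,V_5,W_1,\dots,W_{k-3}$, the $V_i$ arranged as in $C_5$, the $W_j$ pairwise completely joined, and a complete join between the two groups. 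Since $\chi(G)\ge k$ all five $V_i$ are nonempty; writing $P=\sum_i|V_i|$ and $Q=\sum_j|W_j|$, a vertex of $V_i$ has degree at most $|V_{i-1}|+|V_{i+1}|+Q$ and a vertex of $W_j$ has degree at most $P+Q-|W_j|$, so summing these bounds over the classes, using $P+Q=n$ and eliminating $Q$, gives $(3k-4)\,\delta(G)\le(3k-7)n$ — the contradiction (the weighting with each $V_i$ of size $a$ and each $W_j$ of size $3a$ is where $1-\tfrac3{3k-4}$ comes from). For context, a qualitative version with a worse, non-explicit constant follows immediately from the theorem of \cite{AShS} quoted above, since $G$ is itself a $K_k$-free subgraph of $G$ with the maximum number of edges, so that theorem forces the largest $(k-1)$-partite subgraph of $G$ to contain all of $E(G)$, i.e. $G$ is $(k-1)$-partite — but this requires $\delta(G)>(1-\mu(K_k))n$ rather than the explicit bound.

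The main obstacle is the homomorphism step: passing from purely local information (triangle-freeness, or $K_k$-freeness together with "neighbourhoods are colourable") to the rigid global partition into $5$, respectively $5+(k-3)$, classes. Two points take real work — showing that the degree hypothesis prevents any vertex from being far (in the graph metric) from the extremal odd cycle, respectively from the chosen $K_{k-1}$, and checking that the class assignment propagates along every edge without conflict, which is where triangle-freeness (respectively $K_k$-freeness) is used most heavily. Once the homomorphism is in hand the closing degree-count is routine, modulo the boundary analysis that identifies blow-ups of $C_5$ (respectively of $C_5\vee K_{k-3}$) as the unique extremal configurations.
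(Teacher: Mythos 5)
The paper does not prove this statement at all: it is quoted verbatim from Andr\'asfai, Erd\H{o}s and S\'os \cite{AES}, so there is no internal proof to compare yours against; your attempt has to stand on its own. Several things in it are right and worth keeping. Your observation that the inequality must be strict is correct (the balanced blow-up of $C_5$, and more generally its join with a balanced complete $(k-3)$-partite graph with parts three times as large, is $K_k$-free, $k$-chromatic, and has minimum degree exactly $(1-\frac{3}{3k-4})n$), so the paper's ``$\geq$'' is a slight misstatement of \cite{AES}, harmless for how it is used later. Your $k=3$ argument is essentially the standard one and is completable; in fact you can avoid the ``distance one from $C$ plus propagation'' step entirely: once the shortest odd cycle is an isometric $C_5=v_1\cdots v_5$, double counting gives $\sum_i|N(v_i)|>2n$ while every vertex has at most $2$ neighbours on $C$, so every vertex has exactly two, i.e.\ lies in exactly one set $V_i=N(v_{i-1})\cap N(v_{i+1})$, and triangle-freeness immediately makes the $V_i$ independent with no $V_i$--$V_{i+2}$ edges. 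Your closing count $(3k-4)\delta\leq(3k-7)n$ from the hypothetical partition into $V_1,\dots,V_5,W_1,\dots,W_{k-3}$ also checks out, and the neighbourhood reduction $\delta(G[N(v)])>(1-\frac{3}{3(k-1)-4})|N(v)|$ is a correct computation.

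The genuine gap is exactly the step you flag yourself: for $k\geq 4$ the assertion that a $K_k$-free, non-$(k-1)$-colourable graph with $\delta>(1-\frac{3}{3k-4})n$ admits a homomorphism onto $C_5\vee K_{k-3}$ is stated, not proved, and it carries essentially the entire content of the theorem beyond the base case. ``Analysing the neighbourhoods of a $K_{k-1}$'' is not yet an argument: in the target structure only two of the $k-1$ clique vertices play the $C_5$ role and the other $k-3$ must be completely joined to everything, so you would have to show how the degree and $K_k$-freeness hypotheses single out this split and then force every vertex of $G$ into one of the $5+(k-3)$ classes with the prescribed adjacencies; none of the $k=3$ machinery (shortest odd cycles, isometry) transfers directly since $G$ is full of triangles. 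You correctly note that the local conclusion $\chi(G[N(v)])\leq k-2$ does not globalize, but you do not supply the global mechanism that replaces it, and this is where the real difficulty of \cite{AES} lies. As written, the proposal is a correct and complete proof for $k=3$ together with a plausible but unproved structural conjecture for $k\geq 4$; to make it a proof you would either need to carry out that structural analysis in detail or follow the inductive case analysis of the original paper.
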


A generalization of Theorem \ref{thm:highDegforKk} proved in
\cite{ES} is the following.

\begin{theorem}[\cite{ES}]\label{thm:highDegforH} Let $H$ be a fixed
edge critical graph which is not $K_k$ and assume $\chi(H)=k$. If $G$ is
a graph on $n>n_0(H)$ vertices which is $H$-free and contains a copy of
$K_k$ then $\delta(G)\leq (1-\frac{1}{k-3/2})n+O(1)$. \end{theorem}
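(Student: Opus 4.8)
\noindent\emph{Proof proposal.}
The plan is to prove the contrapositive: for a suitable constant $C=C(H)$, if $G$ has $n$ vertices, $\delta(G)>\bigl(1-\tfrac{1}{k-3/2}\bigr)n+C$, and $G$ contains a copy of $K_k$, then $H\subseteq G$. (The hypothesis that $G$ contain a $K_k$ is essential: the balanced complete $(k-1)$-partite graph has minimum degree $\bigl(1-\tfrac1{k-1}\bigr)n>\bigl(1-\tfrac1{k-3/2}\bigr)n$, contains no $K_k$, and is $H$-free since $\chi(H)=k$.)

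\emph{Reduction to a blow-up of $K_{k-1}$ with an extra edge.} As $H$ is edge critical, fix an edge $uv\in E(H)$ with $\chi(H-uv)=k-1$. In any proper $(k-1)$-colouring of $H-uv$ the endpoints $u,v$ must receive the same colour, since otherwise that colouring would properly $(k-1)$-colour $H$ itself. Hence, with $a:=|V(H)|$, the graph $H$ is a subgraph of $F:=K^{+}_{k-1}(a)$, by which I mean the complete $(k-1)$-partite graph with all classes of size $a$ together with one extra edge placed inside one class (put the colour classes of $H-uv$ into the parts, padding to size $a$, and let the class of $u,v$ receive the extra edge). So it suffices to exhibit a copy of $F$ in $G$.

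\emph{Exploiting the $K_k$.} Let $\{x_1,\dots,x_k\}$ span a $K_k$ in $G$ and earmark the edge $x_{k-1}x_k$ to play the role of the extra edge of $F$. Put $D:=n-\delta(G)$, so $D<\tfrac{2n}{2k-3}-C$. The decisive arithmetic is that $D$ is small enough that the neighbourhoods of $k-2$ of the $x_i$ may be intersected and still leave a linearly large set: for any $j\le k-2$ of the indices, $\bigl|\bigcap_i N(x_i)\bigr|\ge n-jD\ge\tfrac{n}{2k-3}$, whereas the same estimate for $j=k-1$ already breaks down; this is exactly where the value $k-3/2$ enters. Thus one may restrict attention to $L:=\bigcap_{i=1}^{k-2}N(x_i)$, a set of linear size that is completely joined to $x_1,\dots,x_{k-2}$ and still contains the earmarked edge $x_{k-1}x_k$, and one tries to assemble, from $\{x_1,\dots,x_{k-2}\}$ and vertices of $L$, a copy of $F=\bigl(\overline{K_a}+\text{edge}\bigr)\vee K_{k-2}(a)$ (join of a near-independent set carrying one edge with a balanced $(k-2)$-partite blow-up). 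A naive one-vertex-at-a-time greedy construction fails, because enlarging a class eventually forces a common neighbourhood of unboundedly many vertices to be nonempty, which the degree bound does not supply. Instead one uses supersaturation: the \emph{same} inequality $\tfrac{2}{2k-3}<\tfrac1{k-2}$ guarantees that a positive proportion of all ordered $(k-1)$-tuples of vertices of $G$ span copies of $K_{k-1}$ (extend a tuple repeatedly inside the common neighbourhood of its current members; the $i$-th step still has $\ge n-(i-1)D$ choices). A K\H{o}v\'ari--S\'os--Tur\'an / Erd\H{o}s--Stone type extraction from this abundance of cliques then produces the sought fat $(k-1)$-partite blow-up, and the reserved edge $x_{k-1}x_k$, together with the linear-size common neighbourhoods of subsets of the $x_i$, is used to make one of its classes carry an edge, completing the copy of $F$.

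\emph{Main obstacle.} The whole argument sits on the boundary of its own inequalities: both $\tfrac{2}{2k-3}<\tfrac1{k-2}$ (powering the clique count) and the ``$k-2$ neighbourhoods affordable, $k-1$ not'' dichotomy hold with essentially no room to spare, so the counting must be carried out carefully, absorbing all lower-order losses into the $O(1)$ term, to land exactly on the exponent $k-3/2$. Moreover the cases $k=3$ and $k=4$ are delicate, since there the secondary estimates degenerate --- for instance, when $k=3$ the graph $G[L]=G[N(x_1)]$ may have very few edges, so the extra edge cannot be harvested naively --- and these presumably require keeping the reserved edge asymmetric from the outset and building the bipartite-like part of $F$ first, or an induction on $k$ carried out through links of edges rather than of single vertices. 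I expect placing the extra edge inside the blow-up under the tight degree budget to be the crux of the proof.
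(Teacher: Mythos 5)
\emph{A remark before the substance:} the paper does not prove this statement at all --- it is quoted verbatim from Erd\H{o}s and Simonovits \cite{ES} and used as a black box --- so there is no internal proof to compare yours against; what follows judges your proposal on its own terms.

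Your reduction is correct and is the right first move: since $H$ is edge critical, $H\subseteq F:=K_{k-1}(a)$ with one extra edge inside a class, and the Tur\'an-graph remark correctly explains why the $K_k$ hypothesis is indispensable. But from that point on the argument has a genuine gap, and you have in effect flagged it yourself. The supersaturation step produces $\Omega(n^{k-1})$ copies of $K_{k-1}$ and hence (by the Erd\H{o}s hypergraph K\H{o}v\'ari--S\'os--Tur\'an argument) a copy of $K_{k-1}(a)$ \emph{somewhere} in $G$, but with no control whatsoever over its location relative to the reserved edge $x_{k-1}x_k$; a $(k-1)$-partite blow-up with independent classes is of course present already in the $H$-free Tur\'an graph, so it carries no information. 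The entire content of the theorem is the step you describe only as ``the reserved edge \dots is used to make one of its classes carry an edge,'' and that step is never carried out: to absorb $x_{k-1}x_k$ into a class of size $a$ you need the other $k-2$ classes to sit inside $N(x_{k-1})\cap N(x_k)$ and the remaining $a-2$ vertices of that class to be joined to all $(k-2)a$ vertices of the other classes, which is exactly the ``common neighbourhood of unboundedly many vertices'' obstruction you correctly identify as killing the greedy approach. Declaring this ``the crux'' and deferring it is not a proof of it.

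A secondary but real problem is your claim that the constant $k-3/2$ ``exactly'' comes from the dichotomy that $k-2$ neighbourhoods may be intersected but $k-1$ may not. The condition $n-(k-2)D>0$ is equivalent to $\delta(G)>\bigl(1-\tfrac{1}{k-2}\bigr)n$, which is a strictly weaker hypothesis than $\delta(G)>\bigl(1-\tfrac{1}{k-3/2}\bigr)n$, and the condition $n-(k-1)D>0$ corresponds to the Tur\'an threshold $\bigl(1-\tfrac{1}{k-1}\bigr)n$; the value $k-3/2$ lies strictly between these and cannot be produced by neighbourhood intersections alone. In the Erd\H{o}s--Simonovits argument the fractional threshold arises from a finer count (balancing two competing estimates, much as $\tfrac{3}{3k-4}$ does in Andr\'asfai--Erd\H{o}s--S\'os), and your sketch contains no mechanism that would single out $\tfrac{2}{2k-3}$ rather than $\tfrac{1}{k-2}$. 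So both the assembly of the extra edge and the derivation of the stated constant remain to be supplied.
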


This implies that if 
$n$ is large enough, $\delta(G)\geq (1-\frac{3}{3k-4})n\geq
(1-\frac{1}{k-3/2})n+O(1)$, and if $G$ is $H$-free for some edge critical
graph $H$ with $\chi(H)=k$ then it must also be $K_k$-free. Together
with Theorem \ref{thm:highDegforKk} we get the following corollary:

\begin{corollary}\label{cor:ifFreeThenChrom} Let $H$ be a fixed edge
critical graph such that $\chi(H)=k$. Let $G$ be a graph on $n>n_0(H)$
vertices which is $H$-free and satisfies 
$\delta(G)\geq (1-\frac{3}{3k-4})n$, then
$\chi(G)\leq k-1$. \end{corollary}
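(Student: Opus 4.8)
The plan is to derive the corollary by combining Theorems \ref{thm:highDegforKk} and \ref{thm:highDegforH}, splitting into two cases according to whether the forbidden graph $H$ equals $K_k$ or not.

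First, if $H=K_k$, there is nothing more to do: $G$ is $K_k$-free and satisfies $\delta(G)\geq(1-\tfrac{3}{3k-4})n$, so Theorem \ref{thm:highDegforKk} applies directly and gives $\chi(G)\leq k-1$.

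Now suppose $H\neq K_k$. The key claim is that $G$ must in fact be $K_k$-free; once this is shown, Theorem \ref{thm:highDegforKk} again finishes the argument. To prove the claim I would argue by contradiction: if $G$ contained a copy of $K_k$, then since $G$ is $H$-free with $H$ edge critical, $H\neq K_k$ and $\chi(H)=k$, Theorem \ref{thm:highDegforH} would force $\delta(G)\leq(1-\tfrac{1}{k-3/2})n+O(1)$. But a short computation gives $\tfrac{3}{3k-4}<\tfrac{1}{k-3/2}$ — equivalently $3(k-3/2)=3k-\tfrac{9}{2}<3k-4$ — so $1-\tfrac{3}{3k-4}>1-\tfrac{1}{k-3/2}$, and the difference $\big((1-\tfrac{3}{3k-4})-(1-\tfrac{1}{k-3/2})\big)n$ grows linearly in $n$. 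Hence for $n>n_0(H)$ taken large enough to absorb the additive $O(1)$, we would have $(1-\tfrac{1}{k-3/2})n+O(1)<(1-\tfrac{3}{3k-4})n\leq\delta(G)$, contradicting Theorem \ref{thm:highDegforH}. Therefore $G$ is $K_k$-free, and Theorem \ref{thm:highDegforKk} yields $\chi(G)\leq k-1$.

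There is essentially no serious obstacle here: the corollary is an immediate consequence of the two quoted theorems. The only point requiring a little care is the bookkeeping — verifying the strict inequality between the two degree thresholds, and choosing $n_0(H)$ large enough that the linear gap between them dominates the $O(1)$ error term appearing in Theorem \ref{thm:highDegforH}.
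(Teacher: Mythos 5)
Your proposal is correct and follows exactly the paper's own argument: use Theorem \ref{thm:highDegforH} (together with the inequality $\frac{3}{3k-4}<\frac{1}{k-3/2}$ and $n$ large) to conclude that $G$ is $K_k$-free, then apply Theorem \ref{thm:highDegforKk}. Your explicit case split on $H=K_k$ versus $H\neq K_k$ is only a minor presentational difference from the paper, which leaves that case implicit.
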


We next state the graph removal lemma as it appears in \cite{CF} (see
also \cite{AFKS}, \cite{RSz} and \cite{Fo}) and prove a simple lemma
using it. Throughout the paper we denote by $v(G)$ the number of vertices 
in the graph $G$.

\begin{theorem}[The graph removal lemma]\label{lem:graphRemovalLemma} For
any graph H with $v(H)$ vertices 
and any $\epsilon > 0$, there exists a $\delta > 0$ such that
any graph on $n$ vertices which contains at most $\delta n^{v(H)}$
copies of $H$ can be made $H$-free by removing at most $\epsilon n^2$
edges.
\end{theorem}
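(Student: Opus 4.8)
This is a classical result, and I would follow its standard derivation from Szemer\'edi's regularity lemma together with the associated counting (embedding) lemma. Write $h=v(H)$. Given $\epsilon>0$, fix a density threshold $d>0$, a regularity parameter $\eta>0$, and a lower bound $m_0$ on the number of parts, chosen so that $1/m_0,\eta$ and $d$ are all much smaller than $\epsilon$ and $\eta$ is much smaller than $d$. Apply the regularity lemma to $G$ to obtain an equitable partition $V(G)=V_1\cup\dots\cup V_M$ with $m_0\le M\le M_0(\eta,m_0)$ in which at most $\eta M^2$ of the pairs $(V_i,V_j)$ fail to be $\eta$-regular.

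Next I would pass to the \emph{cleaned} graph $G'$, obtained from $G$ by deleting every edge that (i) lies inside some part $V_i$, (ii) joins an irregular pair $(V_i,V_j)$, or (iii) joins a pair $(V_i,V_j)$ of density less than $d$. The number of deleted edges is at most $\tfrac{1}{2m_0}n^2+\eta n^2+\tfrac d2 n^2\le \epsilon n^2$ by the choice of $m_0,\eta$ and $d$. Hence it suffices to prove that, for a suitable $\delta=\delta(H,\epsilon)>0$, the cleaned graph $G'$ is $H$-free whenever $G$ contains at most $\delta n^h$ copies of $H$.

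Suppose to the contrary that $G'$ contains a copy of $H$ on vertices $w_1,\dots,w_h$ with $w_a\in V_{j_a}$. Since $G'$ has no intra-part edges, whenever $\{w_a,w_b\}\in E(H)$ the parts $V_{j_a}$ and $V_{j_b}$ are distinct and the pair $(V_{j_a},V_{j_b})$ is $\eta$-regular of density at least $d$; in particular, two vertices of the copy of $H$ can lie in the same part only if they are non-adjacent in $H$. Applying the counting/embedding lemma to the (not necessarily distinct) parts $V_{j_1},\dots,V_{j_h}$ equipped with the dense regular bipartite graphs on the pairs corresponding to the edges of $H$---using that each part has size at least $n/(2M_0)$ and that $\eta$ is small relative to $d$---the number of tuples $(x_1,\dots,x_h)$ with $x_a\in V_{j_a}$ and $x_ax_b\in E(G)$ for every $\{w_a,w_b\}\in E(H)$ is at least $c(H,d)\,n^h$ for some constant $c(H,d)>0$. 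Discarding the $O(n^{h-1})$ tuples with a repeated coordinate (possible only across equal parts, i.e.\ for non-adjacent pairs of $H$) and dividing by $|\mathrm{Aut}(H)|$ still leaves at least a positive-constant multiple of $n^h$ copies of $H$ in $G$; choosing $\delta$ below that constant (and $n$ large) contradicts the hypothesis. Hence $G'$ is $H$-free, which proves the lemma.

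Granting the regularity and counting lemmas as black boxes, the only step requiring genuine care is this last one, where the map $V(H)\to\{V_1,\dots,V_M\}$ read off from the surviving copy of $H$ need not be injective, so the counting lemma must be applied with repeated parts and the lower-order coincidence terms controlled. (One should also keep in mind that $\delta$ comes out tower-type small in $1/\epsilon$ because of the bound $M_0(\eta,m_0)$; this is irrelevant to the statement, which only asserts $\delta>0$.) A self-contained argument would have essentially all of its content in establishing the regularity lemma and the counting lemma themselves.
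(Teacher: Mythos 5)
The paper does not prove this statement at all: it is quoted verbatim as a known black-box result from the Conlon--Fox survey (with further references to Ruzsa--Szemer\'edi, Alon--Fischer--Krivelevich--Szegedy and Fox), so there is no ``paper proof'' to compare against. Your sketch is the standard regularity-lemma derivation and it is correct in outline: the edge-count for the cleaning step ($\tfrac{1}{2m_0}n^2+\eta n^2+\tfrac d2 n^2$) is right, and you correctly identify the one genuinely delicate point, namely that the part-assignment $V(H)\to\{V_1,\dots,V_M\}$ read off a surviving copy of $H$ need not be injective, so the counting lemma must be invoked with repeated parts and the $O(n^{h-1})$ degenerate tuples subtracted. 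Two small things to tidy in a full write-up: (i) the case of small $n$ is handled by shrinking $\delta$ so that $\delta n^{h}<1$ forces $G$ to be $H$-free outright, so the ``$n$ large'' caveat can be removed from the final statement; and (ii) one should say explicitly that $\delta$ is taken to depend only on $H$ and $\epsilon$ (via $M_0(\eta,m_0)$ and $c(H,d)$), not on $G$. Granting the regularity and counting lemmas, nothing is missing; a genuinely self-contained proof would, as you say, consist almost entirely of proving those two lemmas, which is why the paper (reasonably) cites the result rather than proving it.
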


Throughout the paper, for fixed graphs $T$ and $H$ and an integer $n$ we
denote by $ex(n,T,H)$ the maximum possible number of copies of $T$ in an
$H$-free graph on $n$ vertices. 

\begin{lemma}\label{lem:HfreeIsLikeK_k} Let $H$ be a fixed graph such
that $\chi(H)=k$ and let $G$ be an $H$-free graph on $n$ vertices,
where 
$n>n_0(H)$. Then $G$ can be made $K_k$-free by deleting $o(n^2)$ edges.
\end{lemma}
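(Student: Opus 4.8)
The plan is to reduce the lemma to a bound on the number of copies of $K_{k}$ in an $H$-free graph and then apply the graph removal lemma. Set $t:=v(H)$. Since $\chi(H)=k$, a proper $k$-colouring of $H$ together with an injective map of the $i$-th colour class into the $i$-th part shows that $H$ is a subgraph of the blow-up $K_{k}(t)$; hence any graph that contains $K_{k}(t)$ also contains $H$. I will show that, as a consequence, $ex(n,K_{k},H)=o(n^{k})$, and then invoke Theorem~\ref{lem:graphRemovalLemma} with the forbidden graph taken to be $K_{k}$ (so that the relevant exponent there is $k$): if for every fixed $\delta>0$ an $H$-free graph on $n$ vertices has fewer than $\delta n^{k}$ copies of $K_{k}$ whenever $n$ is large, then, choosing $\delta=\delta(\epsilon,k)$ to be the constant supplied by the removal lemma, such a graph can be made $K_{k}$-free by deleting at most $\epsilon n^{2}$ edges; as $\epsilon>0$ is arbitrary this is $o(n^{2})$.

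The substantive step is the claim that no fixed $\delta>0$ can be realised, i.e.\ that an $H$-free graph $G$ on $n$ vertices with $n>n_{0}(H,\delta)$ has fewer than $\delta n^{k}$ copies of $K_{k}$. Suppose not. Form the $k$-uniform hypergraph $\mathcal{K}$ on vertex set $V(G)$ whose edges are precisely the $k$-subsets of $V(G)$ spanning a copy of $K_{k}$ in $G$, so that $e(\mathcal{K})\ge\delta n^{k}$. Now apply the classical theorem of Erd\H{o}s on complete multipartite hypergraphs: for all integers $k\ge2$ and $t\ge1$ there is $\eta=\eta(k,t)>0$ such that any $k$-uniform hypergraph on $N$ vertices with at least $N^{k-\eta}$ edges contains the complete $k$-partite $k$-uniform hypergraph all of whose $k$ parts have size $t$, provided $N$ is large enough in terms of $k,t$. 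Since $\delta n^{k}\ge n^{k-\eta}$ once $n\ge\delta^{-1/\eta}$, for $n$ large $\mathcal{K}$ contains such a configuration: pairwise disjoint sets $W_{1},\dots,W_{k}\subseteq V(G)$, each of size $t$, such that every transversal (one vertex from each $W_{i}$) spans a copy of $K_{k}$ in $G$. In particular, for $i\ne j$ and any $a\in W_{i}$, $b\in W_{j}$, extending $\{a,b\}$ to a transversal shows $ab\in E(G)$; hence $G[W_{1}\cup\dots\cup W_{k}]$ contains $K_{k}(t)$, and therefore $G$ contains $H$, contradicting that $G$ is $H$-free.

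Finally I would assemble the bookkeeping in the order that makes the error term uniform. Given $\epsilon>0$, let $\delta=\delta(\epsilon)>0$ be the constant produced by Theorem~\ref{lem:graphRemovalLemma} for the forbidden graph $K_{k}$. By the previous paragraph there is $n_{1}=n_{1}(H,\epsilon)$ such that every $H$-free graph on $n>n_{1}$ vertices has fewer than $\delta n^{k}$ copies of $K_{k}$, and hence can be made $K_{k}$-free by deleting at most $\epsilon n^{2}$ edges. As $\epsilon$ was arbitrary, the number of edges deleted is $o(n^{2})$, which proves the lemma. The main obstacle is exactly the passage from ``$G$ has many copies of $K_{k}$'' to ``$G$ contains $K_{k}(t)$''; the Erd\H{o}s hypergraph theorem is the natural tool for it, and the remaining steps are routine. (We may of course assume $k\ge2$, i.e.\ that $H$ has an edge.)
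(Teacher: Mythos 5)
Your proof is correct and takes essentially the same route as the paper's: show that an $H$-free graph on $n$ vertices has only $o(n^k)$ copies of $K_k$, then apply the graph removal lemma to $K_k$. The only difference is that the paper cites \cite{ASh} for the bound $ex(n,K_k,H)=o(n^k)$, while you rederive it from scratch via the Erd\H{o}s hypergraph theorem \cite{E} (which is how that bound is proved in \cite{ASh} as well), so the argument is just a self-contained version of the same proof.
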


\begin{proof} Note, first, that the number of copies of
$K_k$ in $G$ is $o(n^k)$.  Indeed, in 
\cite{ASh} it is shown that if a graph $H$
is a subgraph of a blow-up of a graph $T$ then 
$ex(n,T,H)=o(n^{v(T)})$.

Since $\chi(H)=k$, $H$ is contained in a blow-up of $K_k$ and
hence $\N(G,K_k)\leq ex(n,K_k,H)\leq o(n^{k})$. By the
graph removal lemma $G$ can be made $K_k$-free by
removing $o(n^2)$ edges, as needed. \end{proof}

We next prove two additional more technical lemmas.

\begin{lemma} \label{lem:counting} Let $G$ be a graph on $n$ vertices
satisfying $\delta(G)>(1-\epsilon)n$ for some fixed $\epsilon>0$, 
and let $m<k$ and $t$ be integers. 
\begin{enumerate} 
\item 
$\N(G_{part(k-1),K_m},K_m)\geq(1+o(1))
(1-\frac{m(m-1)}{2}\epsilon)\binom{k-1}{m}(\frac{n}{k-1})^{m}$ \item Let
$k=m+1$ then $\N(G_{part(m),K_m(t)},K_m(t))\geq(1+o(1))(1-c'_2\epsilon)
n^{mt} \frac{1}{(t!m^t)^{m}}$ \end{enumerate} where $c'_2:=c'_2(m,t)$.
\end{lemma}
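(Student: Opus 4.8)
The plan is to build an explicit $k-1$-partite (resp. $m$-partite) subgraph of $G$ by a random balanced partition of $V(G)$ and to lower-bound the expected number of copies of $K_m$ (resp. $K_m(t)$) it contains; since $G_{part(k-1),K_m}$ is optimal, its count is at least this expectation. First I would partition $V(G)$ into $k-1$ classes $V_1,\dots,V_{k-1}$ as equally as possible, each of size $(1+o(1))n/(k-1)$, and keep only the edges of $G$ running between different classes. For part (1), I then count copies of $K_m$ whose $m$ vertices lie in $m$ distinct classes: there are $\binom{k-1}{m}$ choices of class-$m$-tuple, and within a fixed choice of classes the number of $m$-tuples of vertices is $(1+o(1))(n/(k-1))^m$; the point is to show that a $(1-o(1))$-fraction of these $m$-tuples actually span a copy of $K_m$ in $G$. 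This is where the minimum degree enters: for a fixed vertex its non-neighborhood has size $<\epsilon n$, so by a union bound over the $\binom{m}{2}$ pairs among the $m$ chosen vertices, the probability (over a uniformly random such $m$-tuple) that some pair is a non-edge is at most $\binom{m}{2}\epsilon(1+o(1)) = \frac{m(m-1)}{2}\epsilon(1+o(1))$. Hence at least a $(1-\frac{m(m-1)}{2}\epsilon)(1+o(1))$ fraction of the $m$-tuples form a $K_m$, and all such copies survive in the $(k-1)$-partite subgraph since their vertices lie in distinct classes. Summing over the $\binom{k-1}{m}$ choices of classes gives the stated bound.

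For part (2), with $k=m+1$ I would again take a balanced partition into $m$ classes $V_1,\dots,V_m$, each of size $(1+o(1))n/m$, and count copies of $K_m(t)$ that place each of the $m$ independent sets of $K_m(t)$ into a distinct class. Fixing which class hosts which part (a constant number of choices, absorbed into $c'_2$) and an ordered selection of $t$ vertices from each of the $m$ classes, the number of such ordered selections is $(1+o(1))(n/m)^{mt}$, and dividing by $(t!)^m$ for the orderings within parts gives $(1+o(1))\frac{1}{(t!m^t)^m}n^{mt}$ unordered candidate configurations. Again I show a $(1-o(1))$-fraction of them actually form $K_m(t)$: a configuration fails only if some vertex in part $i$ is non-adjacent in $G$ to some vertex in part $j\ne i$, and by the degree hypothesis and a union bound over the $\binom{m}{2}t^2$ relevant pairs this failure probability is at most $c'_2\epsilon(1+o(1))$ for a suitable $c'_2=c'_2(m,t)$ (e.g.\ $c'_2 = \binom{m}{2}t^2$). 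Surviving configurations lie in the $m$-partite subgraph because no edge of $K_m(t)$ is internal to a part, so $\N(G_{part(m),K_m(t)},K_m(t))$ is at least this count.

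The only genuinely delicate point is organizing the union-bound/averaging estimate so that the $(1-o(1))$ error from the partition being only approximately balanced, the $O(1/n)$ error from sampling $m$ (resp.\ $mt$) distinct vertices without replacement, and the main $\frac{m(m-1)}{2}\epsilon$ (resp.\ $c'_2\epsilon$) term are cleanly separated; all of this is routine once one fixes the order of quantifiers (choose the partition first, then average over vertex tuples). I do not expect any real obstacle beyond bookkeeping, and I would not grind through the exact lower-order terms in the write-up.
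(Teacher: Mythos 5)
Your overall strategy --- exhibit a $(k-1)$-partite subgraph via a balanced partition and lower-bound the number of transversal copies of $K_m$, resp.\ $K_m(t)$ --- is the same random-partition argument the paper uses; the paper just performs the two steps in the opposite order (first count all copies of $K_m$ in $G$ greedily from the minimum-degree condition, getting $(1+o(1))(1-\binom{m}{2}\epsilon)n^m/m!$, then compute the probability that a fixed copy survives a uniformly random $(k-1)$-partition). There is, however, a concrete quantitative error in your union bound for part (1). Once the partition is fixed and the classes have size about $n/(k-1)$, the probability that a uniformly random vertex of a class $V_j$ is a non-neighbour of a fixed vertex $u$ is bounded only by $\epsilon n/|V_j|=(1+o(1))(k-1)\epsilon$, not by $\epsilon$: the up to $\epsilon n$ non-neighbours of $u$ may all lie inside $V_j$, and for $\epsilon>1/(k-1)^2$ one can construct graphs and balanced partitions for which the average non-adjacency probability between two classes genuinely exceeds $\epsilon$. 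So the fixed-partition version of your argument proves part (1) with the constant $\binom{m}{2}(k-1)$ in place of $\frac{m(m-1)}{2}$, i.e.\ a strictly weaker statement than the one claimed.

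This is easily repaired and does not endanger the paper: either (a) accept the weaker constant, which is still a function of $k$ and $m$ only and is all that any downstream use of the lemma (Lemma~\ref{lem:returningVertices}, Theorems~\ref{thm:mainComplete} and~\ref{thm:mainBlowUp}) actually requires, since the constant is always absorbed into a generic $c(k,m)$ or a ``sufficiently small $\epsilon$''; or (b) also average over a uniformly random partition, in which case each non-edge of $G$ falls between a prescribed pair of classes with probability about $1/(k-1)^2$ and the per-pair failure probability does become $\epsilon(1+o(1))$ --- but at that point you are computing precisely the paper's expectation. Part (2) is unaffected: there $k-1=m$ is determined by $m$ and the constant $c'_2(m,t)$ is left unspecified, so your proposed $c'_2=\binom{m}{2}t^2$ merely needs the extra factor of $m$ (plus the lower-order corrections for sampling $t$ distinct vertices per class, which you already flag).
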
 
\begin{proof} 
To prove part 1 note that as $\delta(G)\geq
(1-\epsilon)n$ the number of copies of $K_m$ in $G$ is at least
$$
n\cdot (1-\epsilon)n\cdot (1-2\epsilon)n\dots
(1-(m-1)\epsilon)n\frac{1}{m!}\geq(1+o(1))
n^m(1-\frac{m(m-1)}{2}\epsilon)\frac{1}{m!} 
$$

Randomly partitioning the graph into $k-1$ sets yields a graph in which
the expected number of copies of $K_m$ is a least: 
\begin{align*} &
(1+o(1))n^m(1-\frac{m(m-1)}{2}\epsilon)\frac{1}{m!}\cdot
\frac{k-2}{k-1}\cdot \frac{k-3}{k-1}\dots \frac{k-m}{k-1} \\=&
(1+o(1))n^m(1-\frac{m(m-1)}{2}\epsilon)\frac{(k-2)!}
{m!(k-1)^{m-1}(k-(m+1))!} 
\\=&(1+o(1))(1-\frac{m(m-1)}{2}\epsilon)\binom{k-1}{m}
(\frac{n}{k-1})^{m}. 
\end{align*} 
Thus $G_{part(k-1),K_m}$ should have at 
least as many copies. This proves (1).

Similarly to prove part 2 observe that 
the number of copies of $K_m(t)$ in $G$ is at least 
$$ 
\frac{1}{m!(t!)^m}(n)_t((1-t\epsilon)n)_t\cdot  \ldots \cdot
((1-(m-1)t\epsilon)n)_t \geq
(1+o(1))\frac{n^{mt}}{m!(t!)^m}(1-c'_2\epsilon). 
$$

Randomly partitioning $G$ into $m$ parts gives a graph in which the
expected number of copies of $K_m(t)$ is at least \begin{align*} &
(1+o(1))(1-c'_2\epsilon)\frac{n^{mt}}{m!(t!)^m}
(\frac{1}{m^{t-1}})(\frac{m-1}{m}
\frac{1}{m^{t-1}})\dots (\frac{1}{m}\frac{1}{m^{t-1}})
\nonumber \\ 
=&(1+o(1))(1-c'_2\epsilon)n^{mt} \frac{1}{m!(t!)^m} 
\frac{m!}{m^{mt}} \nonumber \\ =&(1+o(1))(1-c'_2\epsilon)n^{mt} 
\frac{1}{(t!m^t)^{m}} 
\end{align*} 
and thus $G_{part(m),K_m}$ must have at least as many
copies of $K_m(t)$. \end{proof}


\begin{lemma} 
\label{lem:returningVertices} Let $G$ be a graph on $n$
vertices with $\delta(G)>(1-\epsilon)n$ for some fixed $\epsilon>0$ and
let $t$ and $m<k$ be integers. For a set $U\subseteq V(G)$
satisfying $|U|\geq \alpha n$ for some fixed $\alpha>0$ let $f_{k,m,t}(U)$ 
be the maximum number of
copies of $K_m(t)$ in a $k-1$-partite subgraph of $G[U]$. Then there
exist constants $c_1:=c_1(k,m,\alpha)$ and $c_2:=c_2(m,t,\alpha)$
such that for every  $v\in V(G)\setminus U$ 
\begin{enumerate} 
\item 		$
f_{m,k,1}(U\cup\{v\})\geq
f_{m,k,1}(U)+(1+o(1))|U|^{m-1}\binom{k-1}{m}
\frac{m}{(k-1)^{m}}(1-c_1\epsilon) $ \label{lem:Caes1} 
\item 		
$ f_{m,m+1,t}(U\cup\{v\})\geq f_{m,m+1,t}(U)
+(1+o(1))|U|^{mt-1}\frac{mt}{(t!m^t)^m}(1-c_2\epsilon) 
$\label{lem:Caes2} 
\end{enumerate}
\end{lemma}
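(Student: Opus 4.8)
The plan is to estimate, for a fixed vertex $v\in V(G)\setminus U$, how many new copies of $K_m(t)$ we can create by adding $v$ to an optimal $(k-1)$-partite subgraph of $G[U]$ and placing $v$ in a well-chosen part. First I would fix a $(k-1)$-partition $V_1\cup\dots\cup V_{k-1}$ of $U$ that realizes $f_{m,k,t}(U)$, together with the corresponding $(k-1)$-partite subgraph. I keep all existing edges inside $U$, so the resulting subgraph of $G[U\cup\{v\}]$ still contains $f_{m,k,t}(U)$ copies of $K_m(t)$; the task is to lower bound the number of \emph{additional} copies using $v$. For part~(1), where $t=1$, a copy of $K_m$ through $v$ consists of $v$ together with $m-1$ vertices of $U$, lying in $m-1$ distinct parts none of which is $v$'s part, all pairwise adjacent in the chosen subgraph, and all adjacent to $v$ in $G$.

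The key estimate is a counting argument identical in spirit to Lemma~\ref{lem:counting}. Since $\delta(G)>(1-\epsilon)n$, the neighborhood $N(v)\cap U$ has size at least $|U|-\epsilon n \ge (1-\epsilon/\alpha)|U|$. Restricting attention to $U':=N(v)\cap U$, and using that the $k-1$ parts can be chosen to be essentially balanced (or, if not, one can at worst lose a constant factor depending on $k$ — this is where $c_1$ and $c_2$ absorb the slack), a random choice of $v$'s part and a random choice of the remaining $m-1$ vertices inside $U'$ shows that the expected number of new copies of $K_m$ is at least
\begin{align*}
(1+o(1))\,|U|^{m-1}\binom{k-1}{m}\frac{m}{(k-1)^{m}}(1-c_1\epsilon),
\end{align*}
where the factor $m$ counts the choice of which of the $m$ color classes of the new $K_m$ the vertex $v$ occupies, the $\binom{k-1}{m}/(k-1)^{m}$ comes from selecting $m$ of the $k-1$ parts with the appropriate normalization, and the $(1-c_1\epsilon)$ term collects all the degree defects (from $v$ and from the pairwise adjacencies among the $m-1$ vertices of $U'$). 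Picking $v$'s part to achieve at least the expectation gives a $(k-1)$-partite subgraph of $G[U\cup\{v\}]$ with the claimed number of copies of $K_m(t)$, hence the bound on $f_{m,k,1}(U\cup\{v\})$. Part~(2) is the same computation with $K_m$ replaced by $K_m(t)$ and $k=m+1$: a new copy of $K_m(t)$ through $v$ uses $v$ in one of the $mt$ vertex-slots, the other $mt-1$ vertices chosen from $N(v)\cap U$ subject to the $m$-partite and blow-up constraints, which after the same normalization as in part~2 of Lemma~\ref{lem:counting} yields the factor $\frac{mt}{(t!m^t)^m}$, with all degree defects again absorbed into $(1-c_2\epsilon)$.

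The main obstacle is not any single step but making the bookkeeping honest: one must ensure that the optimal $(k-1)$-partite subgraph realizing $f$ can be taken to have nearly balanced parts (or argue that unbalanced parts only help, or only cost a constant factor), and one must verify that intersecting with $N(v)$ and requiring the $\binom{m}{2}$ internal adjacencies each costs at most an additive $O(\epsilon)$ fraction so that the product of all these $(1-O(\epsilon))$ factors is still $(1-c\epsilon)$ for a constant $c$ depending only on $k,m,t,\alpha$ — this is exactly the role of $\alpha$ in converting $\epsilon n$ defects into $\epsilon|U|/\alpha$ defects. Once that is set up, the displayed inequalities follow from linearity of expectation exactly as in Lemma~\ref{lem:counting}, and choosing the best part for $v$ finishes the proof.
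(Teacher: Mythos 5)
There is a genuine gap, and it is exactly at the point you flag as "the main obstacle" without resolving it. Your plan is to count new copies of $K_m$ through $v$ directly, by putting $v$ in a random part and choosing the other $m-1$ vertices from $N(v)\cap U$, and you propose to handle the unknown structure of the optimal partition by either assuming near-balance or accepting a constant-factor loss. Neither option works. A constant-factor loss is fatal: in the proofs of Theorems \ref{thm:mainComplete} and \ref{thm:mainBlowUp} this lemma must produce \emph{at least} $(1+o(1))|U|^{m-1}\binom{k-1}{m}\frac{m}{(k-1)^m}(1-c_1\epsilon)$ new copies per returned vertex precisely so as to exceed the $(1-\delta)$-discounted removal bound of equation (\ref{eq:numOfK_mRemoved}); any fixed multiplicative deficit destroys the comparison. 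And the random-part version of your count does incur such a deficit: averaging over the $k-1$ choices of $v$'s part, each $(m-1)$-clique of $N(v)\cap U$ spanning $m-1$ parts is usable for only $k-m$ of the $k-1$ choices, so the expectation picks up a factor $\frac{k-m}{k-1}<1$. Avoiding this requires choosing the \emph{best} part for $v$, and justifying that the best part achieves the full constant is not something your outline supplies; nor can you assume the optimal partition realizing $f$ is balanced, since you have no structural information about it.

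The paper's proof sidesteps all of this with one idea your proposal is missing: it never analyzes the partition's structure at all. It only uses the \emph{lower bound} on the total number of copies of $K_m$ in the optimal partition (from Lemma \ref{lem:counting}), then averages over the vertices of $U$ to find a single vertex $w$ lying in at least $\frac{m}{|U|}$ times that total, i.e. in at least $(1+o(1))|U|^{m-1}\binom{k-1}{m}\frac{m}{(k-1)^m}(1-O(\epsilon))$ copies. It then places $v$ in $w$'s part and observes that every copy through $w$ all of whose other vertices are neighbors of $v$ yields a new copy with $w$ replaced by $v$; since $v$ has at most $\epsilon n\le \epsilon|U|/\alpha$ non-neighbors, the number of copies lost this way is at most $\epsilon n\cdot|U|^{m-2}$, which is absorbed into $(1-c_1\epsilon)$. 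This "clone a popular vertex" step is what delivers the exact leading constant with only an $O(\epsilon)$ loss, and it (or an equivalent device) is what your argument needs before the rest of your bookkeeping can go through; part (2) is the same with $K_m(t)$ and the analogous loss estimate.
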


\begin{proof} 
Let $|U|=q\geq \alpha n$. We first prove part \ref{lem:Caes1}. 
Fix a partition of $G[U]$ into $k-1$ parts with $f_{m,k,1}(U)$
copies of $K_m$. By
Lemma \ref{lem:counting}, part 1 
the number of copies of $K_m$ is at least: 
$$
(1+o(1))q^m(1-\frac{m(m-1)}{2}\epsilon)\binom{k-1}{m}\frac{1}{(k-1)^m}.
$$ 
Averaging we get that there is a vertex, say $w\in U$, so that the
number of copies of $K_m$ it takes part in is at least $$
(1+o(1))\frac{m}{q}q^m(1-\frac{m(m-1)}{2}\epsilon)\binom{k-1}{m}
\frac{1}{(k-1)^m}= (1+o(1))q^{m-1}(1-\frac{m(m-1)}{2}
\epsilon)\binom{k-1}{m}\frac{m}{(k-1)^m} $$

Let $U_1,\dots,U_{k-1}$ be the above fixed partition of $U$ which
has
$f_{m,k,1}(U)$ copies of $K_m$ and assume, without loss of
generality, that $w\in U_{k-1}$. We
add $v$ to $U_{k-1}$ and bound from below the number of copies of $K_m$
we add by doing this. Let $b_i=|U_i|$ and let $d_i$ be the number of
neighbors $w$ has in $U_i$ which are not neighbors of $v$. Note that
$\sum_{i\in[k-2]} d_i \leq \epsilon n$ and $\sum_{i\in[k-2]} b_i =q$.

For each $U_i$ we estimate the number of copies of $K_m$ in which $w$ takes
part that use vertices from $U_i$ that are not neighbors of $v$.
There are $d_i$ of those, and in the worst case each such vertex is
connected to all of the sets $U_j$ for $j\ne i,k-1$. Thus the number of
copies of $K_m$ that $w$ takes part in and $v$ does not is at most
\begin{align*} &\sum_{i=1}^{k-2}d_i
\big(\sum_{\{j_1,\dots,j_{m-2}\}\subseteq [k-2]\setminus i}b_{j_1}\dots
b_{j_{m-2}}\big)\\ 
\leq &(d_1+\dots +d_{k-2})(b_1+\dots+b_{k-2})^{m-2}\frac{1}{(m-2)!}\\
\leq &\epsilon n\cdot q^{m-2}\leq \frac{1}{\alpha}\epsilon q^{m-1} 
\end{align*}

And so by adding $v$ the number of copies of $K_m$ added is at least

\begin{align}
&(1+o(1))q^{m-1}[(1-\frac{m(m-1)}{2}\epsilon)\binom{k-1}{m}
\frac{1}{(k-1)^{m}}-\frac{\epsilon}{\alpha}] \nonumber\\ \nonumber 
=&(1+o(1))q^{m-1}\binom{k-1}{m}\frac{1}{(k-1)^{m}}(1-c_1\epsilon) 
\end{align}

The proof of (\ref{lem:Caes2}) is similar. By Lemma
\ref{lem:counting} part 2, in any partition of $G[U]$ into $m$ parts in
which the number of copies of $K_m(t)$ is $f_{m,m+1,t}(U)$, 
this number is at least
$$ 
(1+o(1))\frac{1}{(t!m^t)^{m}}(1-c'_2\epsilon)q^{mt}. 
$$

Let $U=U_1\cup...\cup U_m$ be such a partition. By averaging there must
be a vertex, say $w\in U$, such that the number of copies of $K_m(t)$ it
takes part in is at least: $$ (1+o(1))\frac{mt}{q}q^{mt}
\frac{1}{(t!m^t)^{m}}(1-c'_2\epsilon)=
(1+o(1))q^{mt-1}\frac{mt}{(t!m^t)^{m}}(1-c'_2\epsilon) $$

Assume, without loss of generality,
that $w\in U_m$, and let us add $v$ to $U_m$. Let
$b_i=|U_i|$ and let $d_i$ be the vertices in $U_i$ that are neighbors of
$w$ and not of $v$. Then the number of copies of $K_m(t)$ in this
partition that $w$ takes part in and $v$ does not is at most
\begin{align*} 
&\binom{b_m}{t-1}\sum_{i=1}^{m-1} 
d_i\binom{b_i}{t-1}\prod_{j\in [m-1]\setminus i}\binom{b_j}{t} \\
<&(\sum_{i=1}^{m-1} d_i)\binom{q}{mt-2} <c''_2\epsilon q^{mt-1}
\end{align*} where the last inequality is true for some
$c''_2:=c''_2(m,t,\alpha)$. Thus when adding $v$ to $U_m$ the number of copies
of $K_m(t)$ added is at least $$
q^{mt-1}[\frac{mt}{(t!m^t)^{m}}(1-c'_2\epsilon)-c''_2\epsilon]
=q^{mt-1}\frac{mt}{(t!m^t)^{m}}(1-c_2\epsilon) 
$$ 
as needed. \end{proof}

\section{Maximizing the number of cliques}

In the proof of Theorem \ref{thm:mainComplete} we use the following result
from \cite{ASh}. \begin{proposition}[\cite{ASh}]\label{prop:ex(n,K_m,H)}
Let $H$ be a graph such that $\chi(H)=k>m$ then
$$ex(n,K_m,H)=(1+o(1))\binom{k-1}{m}(\frac{n}{k-1})^m$$
\end{proposition}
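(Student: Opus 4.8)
The plan is to establish matching lower and upper bounds of $(1+o(1))\binom{k-1}{m}(\frac{n}{k-1})^m$. The lower bound is the easy direction. Since $\chi(H)=k$, every $(k-1)$-colorable graph is $H$-free, as $H$ cannot embed into a graph of chromatic number below $k$. In particular the Tur\'an graph $T(n,k-1)$, the complete $(k-1)$-partite graph with balanced parts, is $H$-free. A copy of $K_m$ in it is obtained by selecting $m$ of the $k-1$ parts and one vertex from each, and since each part has size $(1+o(1))\frac{n}{k-1}$ we get $\N(T(n,k-1),K_m)=(1+o(1))\binom{k-1}{m}(\frac{n}{k-1})^m$. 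Hence $ex(n,K_m,H)\geq (1+o(1))\binom{k-1}{m}(\frac{n}{k-1})^m$.

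For the upper bound, let $G$ be an $H$-free graph on $n>n_0(H)$ vertices. First I would apply Lemma \ref{lem:HfreeIsLikeK_k} to delete a set $E_0$ of $o(n^2)$ edges so that the resulting graph $G'$ is $K_k$-free. The crucial observation is that this deletion destroys only $o(n^m)$ copies of $K_m$: every copy of $K_m$ that lies in $G$ but not in $G'$ must use at least one edge of $E_0$, and each fixed edge lies in at most $\binom{n-2}{m-2}=O(n^{m-2})$ copies of $K_m$. Therefore $\N(G,K_m)\leq \N(G',K_m)+|E_0|\cdot O(n^{m-2})=\N(G',K_m)+o(n^m)$.

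It then remains to bound the number of copies of $K_m$ in the $K_k$-free graph $G'$. For this I would invoke Zykov's theorem, the clique analogue of Tur\'an's theorem, which asserts that among all $K_k$-free graphs on $n$ vertices the Tur\'an graph $T(n,k-1)$ maximizes the number of copies of $K_s$ for every $s\leq k-1$. This yields $\N(G',K_m)\leq \N(T(n,k-1),K_m)=(1+o(1))\binom{k-1}{m}(\frac{n}{k-1})^m$. (If one prefers a self-contained argument, this inequality can be reproved by Zykov symmetrization: repeatedly replacing the open neighborhood of one of two non-adjacent vertices by that of the other, in the direction that does not decrease the $K_m$-count, turns non-adjacency into an equivalence relation and so yields a complete multipartite graph; among those with at most $k-1$ parts, balancing the parts maximizes the count by convexity.) Combining with the previous step gives $\N(G,K_m)\leq (1+o(1))\binom{k-1}{m}(\frac{n}{k-1})^m$, matching the lower bound.

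The only genuinely delicate point is the error accounting in the reduction to the $K_k$-free case. Because the hypothesis is $k>m$ we have $k-1\geq m$, so $\binom{k-1}{m}>0$ and the main term is $\Theta(n^m)$; this is precisely what makes the $o(n^m)$ loss from deleting $E_0$ of strictly lower order, and it is where $k>m$ (rather than merely $k\geq m$) is used. Everything else is supplied either by the already established Lemma \ref{lem:HfreeIsLikeK_k}, which packages the graph removal lemma together with the bound $\N(G,K_k)=o(n^k)$ for $H$-free $G$, or by the classical extremal fact about the number of cliques in $K_k$-free graphs.
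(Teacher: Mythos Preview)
The paper does not actually prove this proposition; it is quoted from \cite{ASh} and used as a black box. Your argument is correct, and in fact it parallels the paper's own proof of the analogous Proposition~\ref{prop:ex(n,K_m(t),H)} for blow-ups: there too the lower bound comes from the Tur\'an graph, and the upper bound is obtained by first citing that the $K_{m+1}$-free extremal graph is complete multipartite (hence the balanced Tur\'an graph), and then reducing the general $H$-free case to the $K_{m+1}$-free case via Lemma~\ref{lem:HfreeIsLikeK_k}, losing only a lower-order number of copies in the deletion of $o(n^2)$ edges. The one minor difference is that for the clique-free extremal step you invoke Zykov's theorem directly, whereas the paper (for blow-ups) appeals to the structural result from \cite{ASh}; both routes are standard, and your observation that $k>m$ is precisely what keeps the $o(n^m)$ loss of lower order is the right reading of the hypothesis.
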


\begin{proof}[Proof of Theorem \ref{thm:mainComplete}] 
Let $G$ and $H$ be 
as in the theorem and let $\gex\in \mathcal{G}_{ex}(K_m,H)$.  	If
$\delta(\gex)\geq (1-\frac{3}{3k-4})n$, as $H$ is edge-critical, by
Corollary \ref{cor:ifFreeThenChrom} $\chi(\gex)\leq k-1$ and we are
done. Thus assume towards contradiction that $\delta(\gex)<
(1-\frac{3}{3k-4})n$.

As any partition of $G$ into $k-1$ parts is $H$-free, by Lemma
\ref{lem:counting}, part 1, 
the number of copies of $K_m$ in $G_{ex}$  must be
at least
\begin{equation}
\label{e31}
(1+o(1))(1-\frac{m(m-1)}{2}\epsilon)\binom{k-1}{m}(\frac{n}{k-1})^{m}.
\end{equation}

Consider the following iterative process of removing vertices from
$G_{ex}$. Put $G_0=\gex$ and $n_0=n$. Let $v_0\in V(G)$
be an arbitrarily chosen vertex of $G_0$ satisfying
$d(v_0)<(1-\frac{3}{3k-4})n_0$. Define $G_1=G-v_0$ and
$n_1=n_0-1$. For $j\geq 1$ if the minimum degree in $G_j$ satisfies 
$\delta(G_j)\geq(1-\frac{3}{3k-4})n_j$
then stop the process, otherwise take a vertex $v_j\in V(G_j)$ of
degree
$d_{G_j}(v_j)<(1-\frac{3}{3k-4})n_j$ and define
$G_{j+1}=G_j-v_j$ and $n_{j+1}=n_j-1$. 

We first show that this process must stop after at most $n/2$ steps. 
To see this note that the number
of copies of $K_m$ removed with each deleted vertex is 
exactly the number of
copies of $K_{m-1}$ in its neighborhood. By Proposition
\ref{prop:ex(n,K_m,H)} for any $(k-1)$-chromatic graph $H'$,
$ex(n,K_{m-1},H')=(1+o(1))\binom{k-2}{m-1}(\frac{n}{k-2})^{m-1}$. As
$\gex$ is $H$-free, the neighborhood of any vertex should be
$(H-v)$-free, where $v\in V(H)$ is such that $\chi(H-v)=k-1$. 

Thus at step $j$ (starting to count from $j=0$), at most
$(1+o(1))\binom{k-2}{m-1}(\frac{n_j(1-\frac{3}{3k-4})}{k-2})^{m-1}$
copies of $K_m$ have been removed.

As the following equality holds $$
\frac{1}{k-1}-(1-\frac{3}{3k-4})\frac{1}{k-2}=\frac{1}{(3k-4)(k-2)(k-1)}
$$ one can choose $\delta=\delta(k,m)>0$ so that
$((1-\frac{3}{3k-4})\frac{1}{k-2})^{m-1}=\frac{1}{(k-1)^{m-1}}(1-\delta)$. 
Thus the number of copies of $K_m$ removed at step $j$ 
is no more than: 
\begin{equation}
\label{eq:numOfK_mRemoved} 
(1+o(1)) n^{m-1}_j \binom{k-2}{m-1}\frac{1}{(k-1)^{m-1}}(1-\delta)
=(1+o(1)) n^{m-1}_j \binom{k-1}{m}\frac{m}{(k-1)^{m}}(1-\delta) 
\end{equation}

Together with the fact that 
$$
\sum_{r=0}^{n/2-1}
(n-r)^{m-1}
\leq (1+o(1))(\frac{1}{m}-\frac{1}{m}\frac{1}{2^m})n^m
$$ 
we conclude that 
the number of copies of $K_m$ removed during the first 
$\frac{n}{2}$ steps is at most
\begin{align*} 
&(1+o(1))\sum_{j=0}^{n/2-1} n^{m-1}_j
\binom{k-1}{m}\frac{m}{(k-1)^{m}}(1-\delta) \\\leq&(1+o(1))(1-\delta)
\binom{k-1}{m}\frac{1}{(k-1)^{m}}(1-\frac{1}{2^m})n^m 
\end{align*}

The graph $G_{n/2}$ has at most
$ex(\frac{1}{2}n,K_m,H)=(1+o(1))\binom{k-1}{m}
(\frac{1}{2}\frac{n}{(k-1)})^m$ 
copies of $K_m$, and hence the total number of copies of $K_m$ 
in $G_{ex}$ is at most 
\begin{align*} 
&(1+o(1))n^m(1-\delta) \binom{k-1}{m}\frac{1}{(k-1)^{m}}
(1-\frac{1}{2^m})+(1+o(1))n^m\binom{k-1}{m}\frac{1}{(k-1)^m}
\frac{1}{2^m} \nonumber \\ 
=&(1+o(1))n^m(1-\delta(1-\frac{1}{2^m}))
\binom{k-1}{m-1}\frac{1}{(k-1)^{m}} 
\end{align*} 
But if $\epsilon$ is small enough this contradics (\ref{e31}).
Thus the
process must stop after  $r+1 \leq \frac{n}{2}$ steps.

As $\delta(G_r)\geq (1-\frac{3}{3k-4})n_r$ and $H$ is edge
critical,
Corollary \ref{cor:ifFreeThenChrom} implies that $\chi(G_r)\leq k-1$.
Define $V(G_r)=V_r$. 

The $k-1$ partite subgraph of $G[V_r]$ with the maximum possible number of
copies of $K_m$ has at least as many copies of $K_m$ as $G_r$. By Lemma
\ref{lem:returningVertices}, part 1 we can now add the vertices
removed  during the steps of the process
starting from $j=r-1$ until $j=0$, keeping the resulting subgraph
$(k-1)$-partite, where with each such vertex
we add at least 
$(1+o(1)) n_j^{m-1}\binom{k-1}{m}\frac{m}{(k-1)^m}(1-c_1\epsilon)$
copies of $K_m$. Assuming that $\epsilon$ is small enough to ensure, say,
$c_1\epsilon<\delta/2$
 it follows that in 
each such step the number of
added copies of $K_m$ exceeds the number of copies removed in the
corresponding removal step.

When all the vertices are back we obtain a $k-1$ partite subgraph 
of $G$ containing
more copies of $K_m$ than $G_{ex}$. This subgraph is $H$-free,
contradicting the maximality of $G_{ex}$. Thus the inequality
$\delta(G_{ex}) \geq (1-\frac{3}{3k-4})n$ must hold and the desired 
result follows.
\end{proof}

\section{Maximizing the number of blow-ups of cliques}

To prove Theorem \ref{thm:mainBlowUp} we first need a good estimate on
$ex(n,K_m(t),H)$ for $H$ satisfying $\chi(H)=m+1$.

\begin{proposition} \label{prop:ex(n,K_m(t),H)}  For integers $m$ and $t$ and any 
fixed graph $H$ such
that $\chi(H)=m+1$,
$$ex(n,K_m(t),H)=(1+o(1))\binom{n/m}{t}^m $$ \end{proposition}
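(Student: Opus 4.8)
The plan is to prove matching upper and lower bounds for $ex(n,K_m(t),H)$ when $\chi(H)=m+1$, with the extremal configuration being (essentially) the balanced complete $m$-partite graph $K_m(n/m)$, in which the number of copies of $K_m(t)$ is $(1+o(1))\binom{n/m}{t}^m$.

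\textbf{Lower bound.} First I would exhibit the balanced complete $m$-partite graph $T_m(n)$ on $n$ vertices. Since $\chi(T_m(n)) = m < m+1 = \chi(H)$, this graph is $H$-free (any copy of $H$ inside it would give a proper $m$-coloring of $H$). Each copy of $K_m(t)$ in $T_m(n)$ is obtained by choosing, in each of the $m$ parts, a $t$-subset to serve as one blob of the blow-up; since all cross-part edges are present, any such choice yields a valid copy, and conversely every copy of $K_m(t)$ in an $m$-partite graph must place its $m$ blobs into distinct parts. Hence $\N(T_m(n),K_m(t)) = \binom{\lceil n/m\rceil}{t}^a \binom{\lfloor n/m\rfloor}{t}^{m-a}$ for the appropriate split, which is $(1+o(1))\binom{n/m}{t}^m$. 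This gives $ex(n,K_m(t),H)\geq (1+o(1))\binom{n/m}{t}^m$.

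\textbf{Upper bound.} Let $F$ be an $H$-free graph on $n$ vertices. By Lemma \ref{lem:HfreeIsLikeK_k} (applied with $k = m+1$), $F$ can be made $K_{m+1}$-free by deleting $o(n^2)$ edges; call the resulting graph $F'$. Deleting an edge destroys at most $O(n^{mt-2})$ copies of $K_m(t)$ (fix the two endpoints as two of the at most $mt$ vertices of the blow-up and choose the rest), so $\N(F,K_m(t)) \leq \N(F',K_m(t)) + o(n^2)\cdot O(n^{mt-2}) = \N(F',K_m(t)) + o(n^{mt})$. Thus it suffices to bound $\N(F',K_m(t))$ for a $K_{m+1}$-free graph $F'$. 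For this I would invoke the Kruskal–Katona / Zykov-type stability: among $K_{m+1}$-free graphs on $n$ vertices, the Turán graph $T_m(n)$ maximizes the number of copies of \emph{every} fixed graph of chromatic number at most $m$ — in particular of $K_m(t)$, since $K_m(t)$ is $m$-chromatic. (This is the Zykov / Erdős result on maximizing subgraph counts in $K_{r+1}$-free graphs; alternatively one can cite it directly, or prove it by a symmetrization argument pushing $F'$ toward a complete multipartite graph without decreasing the $K_m(t)$-count, then optimizing the part sizes by convexity of $x\mapsto\binom{x}{t}$ and concavity considerations.) This yields $\N(F',K_m(t)) \leq \N(T_m(n),K_m(t)) = (1+o(1))\binom{n/m}{t}^m$, and combining with the edge-deletion slack gives the matching upper bound.

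\textbf{Main obstacle.} The delicate point is the upper bound step: one needs that the Turán graph is the extremal $K_{m+1}$-free graph for the count of $K_m(t)$, not merely asymptotically for edges. The cleanest route is the symmetrization argument of Zykov: given a $K_{m+1}$-free $F'$, repeatedly replace the neighborhood structure of a vertex of smaller $K_m(t)$-degree by that of a vertex of larger $K_m(t)$-degree (making two non-adjacent vertices "twins"), which never decreases the total count and cannot create a $K_{m+1}$; iterating, $F'$ is transformed into a complete multipartite graph with at most $m$ parts, and then one maximizes $\prod_i \binom{a_i}{t}$ over $\sum a_i = n$, $a_i\geq 0$, with at most $m$ nonzero parts — the maximum, by convexity pushing mass to the extremes but capped at $m$ parts, is attained at the balanced split, giving $(1+o(1))\binom{n/m}{t}^m$. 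I would be careful that the twinning step indeed cannot increase clique number (standard) and that the optimization genuinely favors $m$ equal parts rather than fewer larger parts — here one checks that splitting a part of size $a$ into two parts of sizes $a/2$ multiplies its local contribution $\binom{a}{t}$ by roughly $2\binom{a/2}{t}/\binom{a}{t} = 2^{1-t}(1+o(1)) \cdot$ (and the cross terms only help), so using all $m$ parts dominates for $n$ large. This last comparison is the one routine-but-easy-to-botch computation, but it is elementary.
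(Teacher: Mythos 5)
Your proposal follows essentially the same route as the paper: the balanced complete $m$-partite graph for the lower bound, and for the upper bound a reduction via Lemma \ref{lem:HfreeIsLikeK_k} to the $K_{m+1}$-free case (with the same $o(n^2)\cdot O(n^{mt-2})=o(n^{mt})$ accounting), followed by the fact that among $K_{m+1}$-free graphs the Tur\'an graph maximizes the number of copies of $K_m(t)$. The only difference is that the paper simply cites \cite{ASh} for the statement that the extremal $K_{m+1}$-free graph for counting $K_m(t)$ is complete multipartite, whereas you propose to reprove it by Zykov symmetrization; you correctly identify this as the delicate step, since for $t\geq 2$ a copy of $K_m(t)$ may contain both of the two non-adjacent vertices being twinned, so the clean ``split the count'' argument for cliques needs extra care. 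One caveat: your intermediate claim that $T_m(n)$ maximizes the number of copies of \emph{every} graph of chromatic number at most $m$ among $K_{m+1}$-free graphs is stronger than what is true (and than what you need) --- the paper's concluding remarks even note that for $K_m(t)$ in $K_k$-free graphs with $k\geq m+2$ the extremal complete multipartite graph is generally unbalanced --- but for the specific pair ($K_m(t)$, $K_{m+1}$) the statement holds and your optimization of $\prod_i\binom{a_i}{t}$ over at most $m$ parts (equal parts win by concavity of $\log\binom{x}{t}$, and all $m$ parts beat fewer by the extra factor of order $n^t$) is the right finishing step.
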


\begin{proof} To show that $ex(n,K_m(t),H)\geq(1+o(1))\binom{n/m}{t}^m$
it is enough to take the $m$-sided Tur\'an graph (i.e. the $m$-partite graph
with sides of nearly equal size). As $\chi(H)=m+1$ it is $H$-free and
has $(1+o(1))\binom{n/m}{t}^m$ copies of $K_m(t)$.

As for the upper bound, in \cite{ASh} it is shown that the graph which
is $K_{m+1}$ free and has the maximum possible number of copies of
$K_m(t)$ is a complete multipartite graph. It is not difficult to see
that the Tur\'an graph maximizes the number of copies of $K_m(t)$ among these. Thus
$$ ex(n,K_m(t),K_{m+1})=(1+o(1)) \binom{n/m}{t}^m $$

Let $H$ be as in the proposition, and let $G$ be an $H$-free graph on
$n$ vertices with the maximum number of copies of $K_m(t)$. By Lemma
\ref{lem:HfreeIsLikeK_k} $G$ can be made $K_{m+1}$-free by deleting
$o(n^2)$ edges, and with them at most $o(n^2)O(n^{mt-2})=o(n^{mt})$ copies of
$K_m(t)$. Let $G'$ be the graph obtained by removing those $o(n^2)$
edges. 

As $G'$ is $K_{m+1}$-free we get
\begin{align*}
(1+o(1))ex(n,K_m(t),H)&=(1+o(1))\N(G,K_m(T))=\\
&=\N(G',K_m(t))\leq ex(n,K_m(t),K_{m+1})=(1+o(1))
\binom{n/m}{t}^m
\end{align*}

and so $ex(n,K_m(t),H)\leq (1+o(1)) \binom{n/m}{t}^m$
as needed. \end{proof}

The idea of the proof of Theorem \ref{thm:mainBlowUp} is similar to the
one of Theorem \ref{thm:mainComplete} but some of the estimates are more
involved. 
\begin{proof}[Proof of Theorem \ref{thm:mainBlowUp}] Let $G$
be a graph with $\delta(G)>(1-\epsilon)n$ and let $G_{ex}\in
\mathcal{G}(K_m(t),H)$. 
If $\delta(\gex)\geq (1-\frac{3}{3m-1})n$ then  by Corollary
\ref{cor:ifFreeThenChrom},  $\chi(G_{ex})\leq m$, as $H$ is
edge critical and we are done.

Assume towards contradiction that $\delta(G_{ex})<(1-\frac{3}{3m-1})n$. 
Consider the following iterative process, similar to the one in the 
proof of Theorem \ref{thm:mainComplete}. Put $G_0=G_{ex}$ and $n_0=n$. 
At step $j>0$ if $G_j$ satisfies $\delta(G_j)\geq (1-\frac{3}{3m-1})n_j$ 
then stop the process, otherwise take $v_j\in V(G_j)$ of degree 
$d_{G_j}(v_j)\leq(1-\frac{3}{3m-1})n_j$ and define $G_{j+1}=G_j-v_j$ 
and $n_{j+1}=n_j-1$. We show that the
process must stop after at most $\frac{n}{2}$ steps.

To bound the number of $K_m(t)$ removed at each step we take care of two
cases. If in a copy of $K_m(t)$ there is a vertex in the same color
class of $v_i$ that is a neighbor of $v_i$ in $G_i$, call this copy
\textit{dense}. If all of the vertices in the color class of $v_i$ are
non-neighbors of it in $G_i$ call the copy \textit{sparse}.

First we  estimate the number of dense copies. Let $K^{+}_{m-1}(t)$ be
the graph obtained by taking $K_{m-1}(t)$ and adding to it a vertex that
is connected to all of the other vertices. The number of dense copies of
$K_m(t)$ containing $v_i$ is at most  $\N (G[N(v)],K^{+}_{m-1}(t))n_i^{t-2}$.

As $H$ is edge critical there is a vertex $v\in V(H)$ such that
$\chi(H-v)=m$, let $H'=H-v$. By a result in
$\cite{ASh}$ if $H$ is contained in a blow-up of $T$ then
$ex(n,T,H)=o(n^{v(T)})$. As the neighborhood of $v_i$ must be $H'$-free
and as $H'$ is contained in a blow-up of $K_{m-1}^{+}(t)$, it follows that
$\N(G[N(v)],K^{+}_{m-1}(t))=o(|N(v)|^{t(m-1)+1})$. Thus the number of dense
copies of $K_m(t)$ in $G_{j+1}$ containing $v_i$  is $o(n_i^{tm-1})$.

As for the sparse copies, let $A(v_i)$ be the number of sparse copies of
$K_m(t)$ in $G_i$ containing $v_i$. Let
$N^{c}_{G_i}(v_i)=V(G_i)\setminus( N_{G_i}(v_i)\cup \{v_i\})$ and $d=d_{G_i}(v_i)$,
and let $H'=H-v$ for $v\in V(H)$ such that $\chi(H')=m$. 
Using Proposition \ref{prop:ex(n,K_m(t),H)} we obtain the following bound on the 
number of sparse copies of $K_m(t)$ containing $v_i$
\begin{align*} A(v_i)\leq \sum_{{u_1,\dots,u_{t-1}}\subseteq
N^{c}_{G_i}(v_i)}& ex\big(|N(v_i)\cap N(u_1)\cap \dots \cap
N(u_{t-1})|,K_{m-1}(t),H'\big)\\ \leq &
\binom{n_i-d-1}{t-1}ex(d,K_{m-1}(t),H')\\ =&(1+o(1)) \binom{n_i-d}{t-1}
\binom{d/(m-1)}{t}^{m-1}\\ \leq& (1+o(1))\frac{{(n_i-d)}^{t-1}}{(t-1)!}
(\frac{{d}^{t}}{(m-1)^t t!})^{m-1} \end{align*}

To bound this quantity consider the following function
$f(d)=d^{t(m-1)}(n_i-d)^{t-1}$.  Note that $f(d)$ is a polynomial in $d$, $f(d)> 0$ 
for $0<d<n_i$ and $f(n_i)=f(0)=0$. Furthermore 
\begin{align*} f'(d)=&(d^{t(m-1)}(n_i-d)^{t-1})'\\
%
=&d^{t(m-1)-1}(n_i-d)^{t-2}[t(m-1)(n_i-d)-(t-1)d] \end{align*} 
Thus
$f'(d)=0$ for $d=0$, $d=n_i$,
$d=(\frac{t(m-1)}{t(m-1)+(t-1)})n_i=(1-\frac{t-1}{tm-1})n_i=:\beta$ 
and is positive in $[0,\beta]$. It follows that between 
$0$ and $n$ $f(d)$ obtains its global  maximum at the
single values $0<\beta<n$ for which 
$f'(\beta)=0$, and it is increasing in $[0,\beta]$.

In our case $d<(1-\frac{3}{3m-1})n_i$ and as $m>2$ it follows that
$1-\frac{t-1}{tm-1}>1-\frac{3}{3m-1}$. 
We conclude that $f(d)\leq f((1-\frac{3}{3m-1})n_i)$. Plugging this value it follows that
\begin{align*} A(v_i)\leq& 
(1+o(1)) \frac{{(\frac{3}{3m-1}n_i)}^{t-1}}{(t-1)!}
(\frac{{((1-\frac{3}{3m-1})n_i)}^{t}}{(m-1)^{t}t!})^{m-1}\\ = &(1+o(1))
n_i^{mt-1}(\frac{3}{3m-1})^{t-1}(1-\frac{3}{3m-1})^{t(m-1)}\frac{1}{(t-1)!(m-1)^{t(m-1)}(t!)^{m-1}} \end{align*}

Next we  bound
$(\frac{3}{3m-1})^{t-1}(1-\frac{3}{3m-1})^{t(m-1)}$. As
$\frac{3}{3m-1}=\frac{1}{(3m-1)m}+\frac{1}{m}$  the following holds:

\begin{align*}
(\frac{3}{3m-1})^{t-1}(1-\frac{3}{3m-1})^{t(m-1)}=&(\frac{1}{m}+\frac{1}{(3m-1)m})^{t-1}(\frac{m-1}{m}-\frac{1}{(3m-1)m})^{t(m-1)}\\ =&\frac{1}{m^{t-1}}(\frac{m-1}{m})^{t(m-1)}(1+\frac{1}{3m-1})^{t-1}(1-\frac{1}{(3m-1)(m-1)})^{t(m-1)} \\ \leq & \frac{1}{m^{t-1}}(\frac{m-1}{m})^{t(m-1)}(1+\frac{1}{3m-1})^{t-1}e^{-t/(3m-1)}\\ =&\frac{1}{m^{t-1}}(\frac{m-1}{m})^{t(m-1)}[(1+\frac{1}{3m-1})e^{-1/(3m-1)}]^{(t-1)}e^{-1/(3m-1)}\\ \leq &\frac{1}{m^{t-1}}(\frac{m-1}{m})^{t(m-1)}(1-\delta) \end{align*} 

For an appropriate $\delta:=\delta(m,d)>0$, indeed such a $\delta$ exists as $e^{-1/(3m-1)}<1$ and $(1+\frac{1}{3m-1})e^{-1/(3m-1)}<1$ for $m>2$.

Therefore, the number of copies of $K_m(t)$ (both dense and
sparse) removed  at step $i$ is at most
\begin{align}\label{eq:numOfK_m(t)Removed} 
(1+o(1))
n_i^{mt-1}&(1-\delta)\frac{1}{m^{t-1}}(\frac{m-1}{m})^{t(m-1)}
\frac{1}{(t-1)!(m-1)^{t(m-1)}(t!)^{m-1}} + o(n_i^{mt-1}) \nonumber\\ 
&=(1+o(1))n_i^{mt-1}(1-\delta)\frac{mt}{m^{tm} (t!)^{m}}	. \end{align}

If the process continued for $\frac{n}{2}$ steps, as
$\sum_{r=0}^{n/2}(n-r)^{mt-1}\leq (1+o(1))\frac{n^{mt}}{mt}(1-\frac{1}{2^{mt}})$
the total number of copies of $K_m(t)$ removed is at most 
\begin{align*}
&\sum_{r=0}^{(n/2)-1}(1+o(1)) (n-r)^{mt-1}(1-\delta)\frac{mt}{m^{tm}
(t!)^{m}}\\ \leq& (1+o(1))
n^{mt}(1-\delta)(1-\frac{1}{2^{mt}})\frac{1}{m^{tm} (t!)^{m}}.
\end{align*}

By proposition \ref{prop:ex(n,K_m(t),H)} in the graph $G_{n/2}$ the number of 
copies of $K_m(t)$ is at most
$$
(1+o(1))\binom{n/(2m)}{t}^{m}\leq(1+o(1))
(\frac{(n/2)^t}{m^{t}t!})^m=(1+o(1))n^{tm}\frac{1}{m^{tm}(t!)^m}
\frac{1}{2^{mt}}.
$$ 
Thus the number of copies of $K_m(t)$ in $G_{ex}$ is at most
$$n^{tm}\big((1-\delta(1-\frac{1}{2^{mt}}))\frac{1}{m^{tm}(t!)^m}\big)
$$ in contradiction to the maximality of $G_{ex}$. And so the process
must stop after $n/2$ steps.

Assume that we have stopped at step $r<n/2$ and let $V_r=V(G_r)$. 
By Corollary \ref{cor:ifFreeThenChrom}
$G_r$ must be $m$-partite, thus the $m$-partite subgraph of $G[V_r]$ 
with the
maximum possible number of copies of $K_m(t)$ has at least as many copies 
of $K_m(t)$ as $G_r$. 

By Lemma \ref{lem:returningVertices}, part 2, we can return the vertices 
removed in the process in a reverse order (starting from $v_{r-1}$ until 
$v_0$) keeping the graph $m$-partite and adding with each vertex $v_j$ at least $(1+o(1))n_j^{mt-1}\frac{mt}{(t!m^t)^m}(1-c_2\epsilon)$ copies of $K_m(t)$. 
Assuming that $\epsilon$ is small enough to ensure, say, $c_2\epsilon<\delta/2$ 
it follows that with each vertex $v_j$ we add more copies of $K_m(t)$ than were 
removed at the corresponding step. 

Thus when all vertices are returned we obtain an $m$-partite graph with 
more copies of $K_m(t)$ than $G_{ex}$. As an $m$-partite graph is
$H$-free this contradicts the definition of $G_{ex}$. Thus it must be
that $\delta(G_{ex})\geq (1-\frac{3}{3m-1})n$ and $G_{ex}$ is
$m$-partite. 
\end{proof}

\section{Forbidding graphs that are not edge critical}

The proofs of Theorems \ref{thm:mainComplete} and \ref{thm:mainBlowUp}
actually give a stronger result than stated, as follows
\begin{lemma} \label{cor:BigDiff} Let $m<k$ and $t$ be
integers, let $G$ be a graph on $n$ vertices such that
$\delta(G)>(1-\epsilon)n$, where $\epsilon:=\epsilon(k,m,t)>0$
is sufficiently small, and let $G_{ex}\in
\mathcal{G}_{ex}(K_m(t),K_k)$.
Assume that $k=m+1$ or $t=1$. Then for every $K_k$-free subgraph of $G$
on the same set of vertices, say $G^1\subseteq G$, at least one of the
following holds: \begin{enumerate} \item $\N(G^1,K_m(t))\leq \gamma
\N(G_{ex},K_m(t))$ for some $\gamma:=\gamma(k,m,t)<1$. \item $G^1$ can
be made $k-1$-chromatic by deleting $o(n^2)$ edges. \end{enumerate}
\end{lemma}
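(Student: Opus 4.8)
The plan is to transcribe, with $H$ replaced by $K_k$, the vertex–deletion argument from the proofs of Theorems~\ref{thm:mainComplete} and~\ref{thm:mainBlowUp}, but applied to the given $K_k$-free graph $G^1$ instead of to an extremal graph, and then to read off a quantitative dichotomy from the number of vertices deleted. Two preliminary remarks: choose $\epsilon$ at least as small as the constants in Theorems~\ref{thm:mainComplete} and~\ref{thm:mainBlowUp} for $H=K_k$; then, since $K_k$ is edge critical with $\chi(K_k)=k$, those theorems (applied with $t=1$, resp. with $k=m+1$) say that $G_{ex}$ is itself $(k-1)$-colorable. As $(k-1)$-partite graphs are $K_k$-free this gives $\N(G_{ex},K_m(t))=\N(G_{part(k-1),K_m(t)},K_m(t))$, and combining Lemma~\ref{lem:counting} with Propositions~\ref{prop:ex(n,K_m,H)} and~\ref{prop:ex(n,K_m(t),H)},
$$(1-o(1))(1-c\epsilon)\,M_n\ \le\ \N(G_{ex},K_m(t))\ \le\ (1+o(1))\,M_n ,$$
where $M_n=\binom{k-1}{m}(n/(k-1))^m$ if $t=1$ and $M_n=\binom{n/m}{t}^m$ if $k=m+1$, and $c=c(k,m,t)$.

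Next, run the removal process on $G^1$: put $G_0=G^1$, and while $\delta(G_j)<(1-\tfrac{3}{3k-4})n_j$ delete a vertex $v_j$ of minimum degree; stop at $G_r$, either because $\delta(G_r)\ge(1-\tfrac{3}{3k-4})n_r$ or because $r=n/2$. Exactly as in the two main proofs, $K_k$-freeness of $G^1$ forces every neighbourhood to be $K_{k-1}$-free, so the number of copies of $K_m(t)$ destroyed by deleting $v_j$ is controlled by $ex(\cdot,K_{m-1},K_{k-1})$ (when $t=1$) and, when $k=m+1$, by $ex(\cdot,K_{m-1}(t),K_m)$ for the sparse copies while the dense copies vanish; hence step $j$ destroys at most $(1+o(1))(1-\delta)B\,n_j^{mt-1}$ copies, with $B$ and $\delta=\delta(k,m,t)>0$ the constants of~(\ref{eq:numOfK_mRemoved})/(\ref{eq:numOfK_m(t)Removed}).

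If the process runs the full $n/2$ steps, summing these bounds and using that $G_{n/2}$, being $K_k$-free on $n/2$ vertices, has at most $ex(n/2,K_m(t),K_k)=(1+o(1))M_n/2^{mt}$ copies, yields $\N(G^1,K_m(t))\le(1+o(1))(1-\delta(1-2^{-mt}))M_n$, which with the lower bound on $\N(G_{ex},K_m(t))$ and $\epsilon$ small gives conclusion~(1) for, say, $\gamma=1-\tfrac12\delta(1-2^{-mt})$. If instead the process halts at $r<n/2$, then $G_r$ is $(k-1)$-colorable by Corollary~\ref{cor:ifFreeThenChrom}; replacing $G_r$ by the maximum $(k-1)$-partite subgraph of $G[V(G_r)]$ and re-inserting $v_{r-1},\dots,v_0$ via Lemma~\ref{lem:returningVertices} (valid since $n_r>n/2$, so $\alpha=\tfrac12$, and $\epsilon$ is small enough that each loss $c_i\epsilon<\delta/2$) puts back at step $j$ at least $(1+o(1))(1-c_i\epsilon)B\,n_j^{mt-1}$ copies, a surplus of $\ge(1+o(1))(\delta/2)B\,n_j^{mt-1}$ over what was removed. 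This produces a $(k-1)$-partite, hence $K_k$-free, subgraph $G'\subseteq G$ with $\N(G',K_m(t))\ge\N(G^1,K_m(t))+\Omega(r\,n^{mt-1})$, and since $\N(G_{ex},K_m(t))\ge\N(G',K_m(t))$ and $\N(G_{ex},K_m(t))=\Theta(n^{mt})$ we get $\N(G^1,K_m(t))\le(1-\Omega(r/n))\,\N(G_{ex},K_m(t))$. Hence there is $\eta_0=\eta_0(k,m,t)>0$ so that $r\ge\eta_0 n$ already forces conclusion~(1), while if $r<\eta_0 n$, deleting the at most $rn$ edges incident to $\{v_0,\dots,v_{r-1}\}$ turns $G^1$ into $G_r$ together with isolated vertices — conclusion~(2).

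The step needing the most care is merging these cases into the stated form. As just described, the $r<\eta_0 n$ branch yields conclusion~(2) only as ``$G^1$ can be made $(k-1)$-chromatic after deleting $O(\eta_0 n^2)$ edges,'' and $\eta_0$ shrinks only as the slack $1-\gamma$ in conclusion~(1) shrinks; to reach a genuine $o(n^2)$ one must either fix $\gamma$ close enough to $1$ that any $G^1$ with $\N(G^1,K_m(t))>\gamma\N(G_{ex},K_m(t))$ is forced to halt after $o(n)$ steps, or feed in a stability statement for $ex(n,K_m(t),K_k)$ (a $K_k$-free graph with $(1-o(1))M_n$ copies of $K_m(t)$ is $o(n^2)$-close to a $(k-1)$-partite graph), so that in the $r<\eta_0 n$ branch $G^1$ is already $o(n^2)$-close to $(k-1)$-partite. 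I would pursue the first option, tracking precisely how $\eta_0$ depends on $1-\gamma$; the rest is the transcription of the two main proofs with $H=K_k$.
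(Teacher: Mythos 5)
Your proposal follows the paper's proof of this lemma essentially step for step: the same minimum-degree deletion process run on $G^1$ and capped at linearly many steps, the same per-step upper bounds (\ref{eq:numOfK_mRemoved}) and (\ref{eq:numOfK_m(t)Removed}) (valid since neighborhoods in a $K_k$-free graph are $K_{k-1}$-free), the same passage to a $(k-1)$-partite graph on $V(G_r)$ and re-insertion via Lemma \ref{lem:returningVertices}, and the same final comparison with $\N(G_{ex},K_m(t))$. The difficulty you flag at the end --- that a fixed $\gamma<1$ only forces the process to halt after $O((1-\gamma)n)$ steps rather than $o(n)$ steps, so the two branches do not literally merge into ``fixed $\gamma$ or $o(n^2)$ edges'' --- is real, but it is equally present in the paper's own write-up, which simply splits into $r=o(n)$ versus $r=cn$ and lets $\gamma$ absorb the constants; the lemma is only ever applied in the contrapositive form where $\N(G^1,K_m(t))=(1+o(1))\N(G_{ex},K_m(t))$, which forces $r=o(n)$ and hence genuinely $o(n^2)$ deleted edges, exactly as your quantitative bound $\N(G^1,K_m(t))\le(1-\Omega(r/n))\N(G_{ex},K_m(t))$ shows. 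So your version is, if anything, more candid about the quantifiers, and no idea beyond what you wrote is needed.
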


\begin{proof} If $\delta(G^1)\geq (1-\frac{3}{3k-4})v(G^1)$ then by
Theorem
\ref{thm:highDegforKk} $G^1$ is $k-1$-chromatic and hence case (2) 
holds and we are done. If $\delta(G^1)<
(1-\frac{3}{3k-4})v(G^1)$  we consider a similar process to the one in
the proofs
of Theorem \ref{thm:mainComplete} and \ref{thm:mainBlowUp}. For step
$j=0$ of the process define
$G^1_0=G^1$, for steps $j>0$ let $v_j$ be a vertex of minimum degree in
$G^1_{j-1}$ and define $G^1_j=G^1_{j-1}-v_j$. The process stops when
either
 $\delta(G^1_j)\geq (1-\frac{3}{3k-4})v(G^1_j)$
or when $v(G^1_j)=\alpha n$ for $\alpha:=\alpha(\gamma)$ small enough.

The calculations in the proofs of Theorems \ref{thm:mainComplete} and
\ref{thm:mainBlowUp} (see equation (\ref{eq:numOfK_mRemoved}) and
(\ref{eq:numOfK_m(t)Removed})) yield that when removing a vertex of
degree less than $ (1-\frac{3}{3k-4})v(G^1_j)$ the number of copies of
$K_m(t)$
removed with it is at most $$ (1+o(1))(1-\delta)v(G^1_j)^{mt-1}
\binom{k-1}{m}\frac{mt}{(k-1)^m (t!m^{t-1})^m}. $$

Assume that the process stops at step $r$. If $r=o(n)$ then by Theorem
\ref{thm:highDegforKk} the graph $G^1_r$ is $k-1$-chromatic.
In these $r$ steps $o(n)$ vertices were deleted and with them no more
than
$o(n^2)$ edges, thus case (2) holds.

If $r=cn$ for some $c\leq 1-\alpha$ define the graph $G_r$ as follows. 
If $c=1-\alpha$ take $G_r$ to be a
$k-1$-chromatic subgraph of $G^1$ on the vertices of $G_r^1$ with the
maximum possible
number of copies of $K_m(t)$.
It must be that
$\N(G_r,K_m(t))-\N(G^1_r,K_m(t))<\alpha'n^{mt}$ for an appropriate
$\alpha'=\alpha'(\alpha)$ which tends to $0$ as $\alpha$ tends to $0$.
If $c>\alpha$ take $G_r=G^1_r$, by Theorem \ref{thm:highDegforKk} this
graph is $k-1$-chromatic.

As $G_r$ is $k-1$-chromatic in both cases we can apply Lemma
\ref{lem:returningVertices} to it and  add back
the vertices removed in the process, starting from $j=r-1$ to $j=1$,
while keeping the graph $k-1$-chromatic. We get that the number of
copies of $K_m(t)$ added with each
vertex $v_j$ is at least $$(1+o(1))(1-c\epsilon)v(G^1_j)^{mt-1}
\binom{k-1}{m}\frac{mt}{(k-1)^m (t!m^{t-1})^m}.$$ 
Let $G^2$ be the graph obtained after adding back all the vertices.

Assume that $\epsilon$ is small
enough to ensure that $\delta - c\epsilon>c'>0$ for some
$c':=c'(\gamma)$. Let $n_j=v(G^1_j)=n-j$, and note
that $\sum_{j=0}^r n_j^{mt-1}=\sum_{j=0}^{cn} (n-j)^{mt-1}\geq
(1+o(1))n^{mt}\frac{1}{mt}(1-(1-c)^{mt})$. Thus the difference in the
number of copies of $K_m(t)$ in $G^1$ and $G^2$ is at least
\begin{align*} \N(G^2,K_m(t))-\N(G^1,K_m(t))\geq &(1+o(1)) \sum_{j=0}^r
c'n_j^{mt-1} \binom{k-1}{m}\frac{mt}{(k-1)^m
(t!m^{t-1})^m}-\alpha'n^{mt}
\\ 
\geq &(1+o(1))
(c'-\alpha')(1-(1-c)^{mt})n^{mt}\binom{k-1}{m}\frac{1}{(k-1)^m
(t!m^{t-1})^m}
\\
 =&(1+o(1))(1-\gamma)\N(G_{ex},K_m(t)) \end{align*}
where $c'$ and $\alpha'$ are chosen so that the last equality holds. 

As $G^2$ is a $K_k$-free subgraph of $G$,  $ \N(G_{ex},K_m(t))\geq
\N(G^2,K_m(t)) $ and
thus $\gamma\N(G_{ex},K_m(t))\geq \N(G^1,K_m(t))$  and case (1) holds,
as needed.

\end{proof}

The proof of Proposition \ref{prop:otherH} is now a simple corollary of
the last lemma.
\begin{proof}[Proof of Proposition \ref{prop:otherH}.] Let $G$ be a
graph on $n$ vertices with $\delta(G)>(1-\epsilon)n$ and let
$G_{ex(H)}\in \mathcal{G}_{ex}(K_m(t),H)$. By Lemma
\ref{lem:counting} $\N(G_{ex(H)},K_m(t))=\Theta(n^{mt})$.
%
By Lemma \ref{lem:HfreeIsLikeK_k} there is a graph $G_1\subseteq
G_{ex(H)}$ which is $K_k$-free and $e(G_{ex(H)})-e(G_1)=o(n^2)$, and
thus $$\N(G_{ex(H)},K_m(t))=(1+o(1))\N(G_1,K_m(t)).$$

Let $G_{ex(K_k)}\in \mathcal{G}(K_m(t),K_k)$.
%
To apply Lemma \ref{cor:BigDiff} we show that
\begin{equation}\label{eq:Case1NotTrue}
\N(G_1,K_m
(t))=(1+o(1))\N(G_{ex(K_k)},K_m(t)).
\end{equation}
By theorems
\ref{thm:mainComplete} and \ref{thm:mainBlowUp} 
$G_{ex(K_k)}$ is $k-1$-chromatic, and so it is $H$-free. Together with the fact that
 $G_1$ is $K_k$-free, we get
 $$\N(G_1,K_m(t))\leq
\N(G_{ex(K_k)},K_m(t))\leq
\N(G_{ex(H)},K_m(t))=(1+o(1))\N(G_1,K_m(t))$$ 
implying (\ref{eq:Case1NotTrue}).

Thus case (1) in Lemma \ref{cor:BigDiff} does not hold for $G_1$, and so case 
(2) must hold, i.e. $G_1$ can be made $k-1$-chromatic by deleting $o(n^2)$ edges. 
 As we got $G_1$ from
$G_{ex(H)}$ by deleting $o(n^2)$ edges we get the required result.
\end{proof}

\section{Concluding remarks and open problems} 
\begin{itemize} 
\item Corollary \ref{cor:ifFreeThenChrom} and Theorems
\ref{thm:mainComplete} and \ref{thm:mainBlowUp} cannot be directly
generalized for graphs 
$H$ which are not edge critical. 
In \cite{AS} the following is
shown  (a weaker version of this statement is proved in \cite{AShS})
\begin{theorem}[\cite{AS}] 
\label{thm:deleteEdgeForChrom} Let $H$ be a
fixed graph on $h$ vertices such that $\chi(H)=k\geq 3$ and let $G$ be
an $H$-free graph on $n$ vertices with $\delta(G)\geq
(1-\frac{3}{3k-4}+o(1))n$, where $n$ is large enough. Then one can
delete at most $O(n^{2-1/(4(k-1)^{2/3}h)})$ edges from $G$ and make it
$k-1$-colorable. 
\end{theorem}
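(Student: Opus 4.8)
The plan is to reduce everything to the Andr\'asfai--Erd\H{o}s--S\'os theorem (Theorem~\ref{thm:highDegforKk}): since a $K_k$-free graph with minimum degree at least $(1-\tfrac{3}{3k-4})n$ is already $(k-1)$-colorable, it suffices to make $G$ itself $K_k$-free by deleting only $O(n^{2-c})$ edges and then to restore a minimum-degree condition that is still above the threshold. Here the slack in the hypothesis $\delta(G)\ge (1-\tfrac{3}{3k-4}+o(1))n$ --- which we may as well read as a fixed $\epsilon>0$ --- is precisely what will absorb the (far smaller) degree loss produced by the deletions.

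First I would control the number of copies of $K_k$. Since $\chi(H)=k$ and $v(H)=h$, the graph $H$ embeds into the balanced blow-up $K_k(h)$, so $G$ is $K_k(h)$-free. A quantitative supersaturation argument --- an iterated K\H{o}v\'ari--S\'os--Tur\'an estimate that descends from $K_k$ through common neighbourhoods to the bipartite case $K_{h,h}$, whose Tur\'an number is $O(N^{2-1/h})$ --- then yields $\N(G,K_k)\le n^{k-c_1}$ for a constant $c_1=c_1(k,h)>0$; this is the polynomial-rate analogue of the observation $\N(G,K_k)=o(n^k)$ used in Lemma~\ref{lem:HfreeIsLikeK_k}. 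The precise value $c_1=\Theta\big(1/((k-1)^{2/3}h)\big)$ comes from optimising how the exponent degrades over the $\Theta(k)$ levels of this recursion, and I would not reproduce that bookkeeping here.

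The heart of the proof is to upgrade ``few copies of $K_k$'' to ``few edges suffice to remove them all''. A direct appeal to the graph removal lemma (Theorem~\ref{lem:graphRemovalLemma}), as in the proof of Lemma~\ref{lem:HfreeIsLikeK_k}, only gives an $o(n^2)$ bound with no rate, so the minimum-degree hypothesis must be exploited. The mechanism I would aim for is to show that the copies of $K_k$ concentrate on a small set $R$ of ``heavy'' edges --- each lying in at least $n^{k-2-o(1)}$ copies of $K_k$, and chosen so that every $K_k$ of $G$ uses one --- so that double-counting these heavy copies against $\N(G,K_k)\le n^{k-c_1}$ forces $|R|\le n^{2-c_1+o(1)}$, and deleting $R$ destroys every $K_k$. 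For $k=3$ this is clean: an inclusion--exclusion on the three neighbourhoods of a triangle, using $\delta(G)>n/3$, shows that every triangle contains an edge of codegree $\Omega(n)$, and such an edge lies in $\Omega(n)$ triangles. For general $k$ one passes to the common neighbourhood $N(u)\cap N(v)$ of an edge; the decisive point is the identity $1-\tfrac{n-\delta}{2\delta-n}=1-\tfrac{3}{3(k-2)-4}$, which says that the relative minimum degree inside $N(u)\cap N(v)$ is again at least the Andr\'asfai--Erd\H{o}s--S\'os threshold one clique-size lower, so I would organise the step as an induction on $k$. Having made $G$ $K_k$-free by deleting $R$, I would then isolate the set $W$ of vertices incident to more than $n^{1-c'}$ edges of $R$; since $\sum_v \deg_R(v)=2|R|$ we get $|W|=o(n)$, deleting the remaining edges at $W$ costs a further $O(n^{2-c})$ edges, and the resulting $K_k$-free graph still has minimum degree at least $(1-\tfrac{3}{3k-4}+\tfrac{\epsilon}{2})n$, so Theorem~\ref{thm:highDegforKk} $(k-1)$-colours it while the now-isolated vertices of $W$ take any colour.

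The main obstacle is exactly this concentration step: it is genuinely false in general that a graph with few copies of $K_k$ can be made $K_k$-free by deleting few edges --- Ruzsa--Szemer\'edi/Behrend-type constructions preclude any polynomial removal lemma for $K_k$ --- so the entire content of the theorem is that the large minimum degree rigidifies where the copies of $K_k$ can lie. Turning that rigidity into a clean inductive quantitative statement, and in particular tracking how the dependence on $k$ and $h$ enters the final exponent, is where the real work lies; the $k=3$ case is a faithful model, but the general case requires interleaving the supersaturation bound with the self-similar behaviour of the Andr\'asfai--Erd\H{o}s--S\'os threshold under passage to common neighbourhoods.
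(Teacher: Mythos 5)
A preliminary remark: this paper does not prove Theorem~\ref{thm:deleteEdgeForChrom} at all --- it is quoted in the concluding remarks from the Alon--Sudakov paper \cite{AS} --- so there is no in-paper proof to measure you against, and I am judging your sketch on its own terms. Your frame is the right one: reduce to making $G$ $K_k$-free by deleting $O(n^{2-c})$ edges, clean up the $o(n)$ vertices whose degree drops too much, and finish with Andr\'asfai--Erd\H{o}s--S\'os; and you correctly identify both the supersaturation input ($G$ is $K_k(h)$-free, hence $\N(G,K_k)\le n^{k-c_1}$) and the reason a generic removal lemma cannot work. Your $k=3$ instance is essentially a complete argument (the threshold there is $2n/5$, not $n/3$, but since $2/5>1/3$ the inclusion--exclusion you describe goes through), and the degree-cleanup via the set $W$ is fine.

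For $k\ge 4$, however, the central step is missing rather than merely deferred. Your double count requires that \emph{every} copy of $K_k$ contain an edge lying in $n^{k-2-o(1)}$ copies of $K_k$; anything weaker, say $n^{k-3}$, gives $|R|\le n^{3-c_1+o(1)}$, which is useless. The self-similarity you invoke --- that $G[N(u)\cap N(v)]$ has relative minimum degree at least $1-\tfrac{3}{3(k-2)-4}$ (note this is \emph{two} clique-sizes lower, not one, since you pass to the common neighbourhood of an edge) --- only guarantees that this induced subgraph sits above the AES threshold for $K_{k-2}$ and contains a single $K_{k-2}$, namely the rest of the clique. That is precisely the statement you are trying to prove, two levels down, and it does not bootstrap to a count: already for $k=4$ the threshold $1-\tfrac{3}{3\cdot2-4}$ is negative, so the recursion tells you only that $N(u)\cap N(v)$ spans at least one edge, i.e.\ that $uv$ lies in at least one $K_4$; for $k=5$ it yields $\Omega(n)$ triangles in the common neighbourhood where $n^{3-o(1)}$ are needed. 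Nothing in your argument forces some \emph{other} edge of such a clique to be heavy either, so it is not even clear that the concentration statement you rely on is true under these hypotheses --- and indeed the proof in \cite{AS} is organized around a different quantitative claim (roughly, that a graph with this minimum degree which cannot be made $(k-1)$-colorable by deleting $n^{2-\gamma}$ edges must contain at least $n^{k-\beta(\gamma)}$ copies of $K_k$, contradicting $K_k(h)$-freeness), together with Nikiforov-type clique supersaturation, which is also where the exponent $(k-1)^{2/3}$ actually comes from. As written, your proposal proves the $k=3$ case and correctly diagnoses the difficulty for general $k$, but does not resolve it.
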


This suggests that 
a stronger version of Proposition \ref{prop:otherH},
stating that any extremal graph $G_{ex}$ as in the proposition 
can be made $k-1$-chromatic by
deleting $O(n^{2-\mu(H)})$ edges for some $\mu(H)>0$, 
is likely to be true.

\item Theorem \ref{thm:mainBlowUp} is limited to the case where
$\chi(H)=m+1$, and from this we also get the condition in Proposition
\ref{prop:otherH}. One of the problems in extending it to graphs $H$
with higher chromatic number is that of finding an explicit tight
bound on
$ex(n,K_m(t),K_k)$ for $k>m+1$. In \cite{ASh} it is shown that the
extremal graph is $k-1$-partite. However, it is not difficult to check that for $k\geq m+2$ such that
$m\nmid k-1$ and large values of $t$,
the parts are not of equal sizes.
%

\item Theorems in the same spirit as those proven here
may hold for other pairs of graphs $T$
and $H$. In \cite{ASh} it is observed that if $H$ is not contained in any
blow-up of $T$ then $ex(n,T,H)=\Theta(n^{v(T)})$. This of course does
not mean that the extremal graph is a blow-up of $T$, but in cases 
it is a similar behavior 
to that in the results proven here might be expected.

A notable example is the case $T=C_5$ and $H=K_3$. In \cite{HHKNR}
and independently \cite{G} it is shown that when $5|n$ the extremal
graph is the equal sided blow-up of $C_5$. It might be true that
this behavior holds for subgraphs of graphs of high minimum degree and
not only for subgraphs of $K_n$, that is, the extremal subgraphs
in this case 
may be subgraphs of the equal sided blow-up of $C_5$.

\item
The problem of obtaining the best possible bounds for the 
minimum degree
ensuring that the results stated in Theorems 
\ref{thm:mainComplete}, \ref{thm:mainBlowUp} and Propositon
\ref{prop:otherH} hold is also interesting, but appears to be
difficult. Even the very special case of
Theorem \ref{thm:mainComplete} with
$H=K_3$ and $m=2$, conjectured in
\cite{BKS} to be $3/4+o(1)$, is open.
\end{itemize}

\end{document}